\documentclass{article}

\usepackage{graphicx} 
\usepackage{mathtools}
\usepackage{bm}
\usepackage{amssymb}
\usepackage{amsthm}
\usepackage{amsmath}
\usepackage{todonotes}
\usepackage[left=25mm,right=25mm,top=30mm,bottom=30mm]{geometry}
\usepackage{cases}
\usepackage{stmaryrd}
\usepackage[svgnames]{xcolor}
\usepackage{url}
\usepackage{algorithm}
\usepackage{algorithmicx}
\usepackage{algpseudocode}
\usepackage{placeins}
\usepackage{float}

\newtheorem{theorem}{Theorem}[section]

\theoremstyle{remark}
\newtheorem{remark}[theorem]{Remark}

\theoremstyle{definition}

\numberwithin{equation}{section}

\allowdisplaybreaks

\title{A stabilized dual-SAV parametric finite element framework for constrained planar geometric flows with mesh regularization}
\makeatletter
\renewcommand\@date{{%
  \vspace{-\baselineskip}%
  \large\centering
  \begin{tabular}{@{}c@{}}
    Koya Sakakibara\textsuperscript{1,2} \\
    \normalsize ksakaki@se.kanazawa-u.ac.jp
  \end{tabular}

  \bigskip

  \textsuperscript{1}Faculty of Mathematics and Physics, Institute of Science and Engineering, Kanazawa University\par
  \textsuperscript{2}RIKEN iTHEMS
}}

\begin{document}

\maketitle

\begin{abstract}
Parametric finite element discretizations of constrained geometric flows must
simultaneously address high-order geometric stiffness, mesh degeneration, and
nonlinear global constraints. This paper develops a stabilized dual-SAV
(scalar auxiliary variable) parametric finite element framework for planar
closed curves. The proposed formulation introduces separate auxiliary variables
for the physical geometric energy and for an artificial mesh regularization
energy. The mesh regularization is coupled only to tangential motion by
projecting out its normal variation, so that mesh redistribution changes the
parametrization without introducing an artificial normal driving force.
Based on this dual-energy structure, we construct a semi-implicit
frozen-metric scheme with zero-order stabilization. The scheme leads to linear
spatial response problems and satisfies discrete dissipation estimates for the
modified geometric and mesh SAV energies. Nonlinear global constraints are
handled by an algebraic block reduction: after solving a small number of
symmetric positive-definite response problems, the remaining nonlinear system
involves only the geometric auxiliary variable and the Lagrange multipliers.
For $K$ global constraints, this reduced nonlinear system has dimension
$K+1$; in particular, simultaneous area and length constraints lead to a
three-dimensional nonlinear system, independently of the number of mesh
vertices.
Numerical experiments for curve shortening, area-preserving curve shortening,
curve diffusion, and Helfrich-type flows illustrate the modified-energy
dissipation, the enforcement of geometric constraints, and the improvement of
mesh quality for both second- and fourth-order examples.
\end{abstract}

\section{Introduction}
\label{sec:intro}

\subsection{Background and numerical challenges}

Geometric evolution equations and moving boundary problems arise in a wide
range of models in materials science, fluid interfaces, and mathematical
biology. Typical examples include the curve shortening flow for the evolution
of plane curves \cite{epstein1987curve}, curvature-driven grain boundary motion
\cite{mullins1956two-dimensional}, surface-diffusion-type models for thermal
grooving and solid-state morphology changes \cite{mullins1957theory}, and
elastic flows associated with the Willmore and Helfrich energies
\cite{willmore1993riemannian,helfrich1973elastic}. Many of these problems can
be interpreted as gradient flows of geometric energies with respect to
appropriate metrics, such as the $L^2$ or $H^{-1}$ metric. In applications to
closed interfaces and biomembranes, the evolution is often subject to global
geometric constraints, such as the preservation of enclosed area, volume, or
perimeter \cite{elliott2010modeling,barrett2020parametric}.

From the numerical point of view, constrained geometric flows present several
closely related difficulties. First, the governing equations are nonlinear and
may contain high-order geometric derivatives; fourth-order flows, for example,
lead to severe time-step restrictions for explicit schemes. Second, global
constraints introduce Lagrange multipliers and nonlinear algebraic coupling at
the discrete level. Third, in parametric discretizations, the mesh points move
with the evolving interface and may cluster or spread unevenly, which can lead
to mesh degeneration and ill-conditioned algebraic systems.

Parametric finite element methods are attractive for such problems because
they represent the interface sharply and allow geometric quantities and global
constraints to be treated directly. Classical parametric finite element
approaches for geometric evolution equations go back to Dziuk's work on
evolving surfaces \cite{dziuk1991algorithm}, and the computation of geometric
partial differential equations and mean curvature flow has been extensively
surveyed by Deckelnick, Dziuk, and Elliott
\cite{deckelnick2005computation}. Parametric finite element methods have also
been developed for higher-order curvature flows and evolving hypersurfaces
\cite{barrett2007parametric,barrett2008parametric}, with good mesh properties
and stability mechanisms reviewed in \cite{barrett2020parametric}.
Nevertheless, combining dynamic mesh regularization, energy-stable
semi-implicit time stepping, and nonlinear global constraints in a single
linear large-scale framework remains a challenging problem.

\subsection{Mesh redistribution and structure-preserving parametric methods}

A central issue in parametric methods for geometric flows is the control of the
mesh parametrization. Since the normal velocity determines the geometric shape
of the curve, tangential velocities may be used to redistribute mesh points
without changing the underlying geometric evolution. This idea has been
developed in boundary tracking and front-tracking methods, where tangential
velocities are prescribed to improve the distribution of marker points along
the evolving curve \cite{kimura1997numerical,mikula2001evolution,mikula2004direct}.
Curvature-adjusted tangential velocities were further studied by
\v{S}ev\v{c}ovi\v{c} and Yazaki \cite{sevcovic2011evolution}, providing a
flexible mechanism for adaptive redistribution along plane curves.

Within the parametric finite element framework, mesh redistribution can also
be generated by the variational structure of the discretization. A prominent
example is the formulation of Barrett, Garcke, and N\"urnberg and its
extensions, which yield stable parametric finite element approximations with
favorable mesh properties for second- and fourth-order curvature-driven flows
\cite{barrett2007parametric,barrett2008parametric,barrett2020parametric}.
Another approach uses reparametrization via the DeTurck trick and has been
applied to curve shortening and mean curvature flow with improved mesh behavior
\cite{elliott2017approximations}. More recently, artificial tangential motion
has been incorporated into evolving finite element methods with convergence
analysis \cite{bai2024convergent}, while adaptive redistribution strategies
have been proposed for high-order curve flows \cite{duan2026adaptive}.

These developments show that tangential motion is an effective tool for
maintaining mesh quality. However, its incorporation into structure-preserving
schemes with nonlinear global constraints remains delicate. In our previous
work with Kemmochi and Miyatake, artificial tangential velocities were combined
with discrete gradient methods for planar curve flows
\cite{kemmochi2025structure}. That method preserves both energy dissipation
and geometric constraints at the level of the original discrete gradient
structure. However, it is fully implicit and leads to nonlinear algebraic
systems involving the spatial degrees of freedom.

Recent structure-preserving parametric finite element methods based on
intrinsic weak formulations or Lagrange multiplier approaches can also maintain
good mesh quality without explicitly prescribing an artificial mesh energy
\cite{garcke2025structure,bao2025energy}. These methods provide powerful
geometric discretizations. Nevertheless, a framework that combines explicit
mesh regularization, semi-implicit energy stabilization, and nonlinear global
constraints in an algebraically decoupled form has not yet been fully
developed.

\subsection{Energy stabilization schemes and the SAV approach}

Another important issue in the numerical approximation of geometric gradient
flows is the stable treatment of nonlinear energy variations. Fully implicit
energy-stable schemes are often robust, but they typically require the solution
of nonlinear systems at each time step. On the other hand, explicit or
semi-explicit treatments of nonlinear geometric forces may suffer from severe
time-step restrictions, especially for fourth-order flows. This difficulty has
motivated the development of energy-stabilized time discretizations, including
convex splitting methods \cite{eyre1998unconditionally} and invariant energy
quadratization methods \cite{yang2017numerical}.

The scalar auxiliary variable (SAV) approach introduced by Shen, Xu, and Yang
\cite{shen2018scalar} provides a particularly flexible strategy for gradient
flows. By introducing an auxiliary scalar variable associated with the nonlinear
part of the energy, the SAV formulation leads to linear schemes while retaining
a modified energy dissipation structure. Several extensions and refinements of
the SAV methodology have since been proposed, including multiple-SAV methods
for constrained gradient flows \cite{cheng2018multiple}, efficient SAV variants
\cite{huang2020highly}, SAV formulations for conservative and dissipative
systems \cite{kemmochi2022scalar}, and recent constant or weighted SAV
formulations designed to improve the relation between the modified and original
energies \cite{zhang2024new,huang2026weighted}.

Applying SAV ideas to parametric finite element discretizations of constrained
geometric flows, however, raises additional difficulties that do not appear in
standard Eulerian gradient-flow settings. The physical geometric energy is
invariant under tangential reparametrization and drives the normal evolution,
whereas the mesh regularization energy is artificial and depends on the chosen
parametrization. Moreover, global geometric constraints introduce Lagrange
multipliers that couple nonlinearly to the updated curve. If these components
are treated in a monolithic way, the resulting scheme may again require a large
nonlinear saddle-point solve. The goal of the present work is therefore to
combine the SAV stabilization principle with a parametric finite element
formulation in such a way that the physical geometric flow, the tangential mesh
regularization, and the global constraints remain algebraically separable.

\subsection{Proposed method: a stabilized dual-SAV framework}

In this paper, we develop a stabilized dual-SAV parametric finite element
framework for constrained geometric flows of planar closed curves. The main
idea is to represent the physical geometric energy and the artificial mesh
regularization energy by two separate scalar auxiliary variables. The physical
geometric energy drives the normal evolution of the curve, whereas the mesh
regularization energy is used only to generate a tangential redistribution of
mesh points. This separation allows the artificial mesh mechanism to improve
the parametrization without introducing an additional normal driving force into
the prescribed geometric evolution.

The proposed method is designed to combine three numerical requirements that
are difficult to satisfy simultaneously: stable treatment of high-order
geometric stiffness, active control of mesh quality, and accurate enforcement
of nonlinear global constraints. To this end, we introduce a frozen-metric
semi-implicit discretization with zero-order stabilization. The resulting
large-scale spatial problems are linear once the scalar auxiliary variables and
the Lagrange multipliers are fixed. We then exploit this structure to derive an
algebraic block reduction, in which the remaining nonlinear system involves
only the geometric scalar auxiliary variable and the Lagrange multipliers.

Thus, the distinctive point of the present framework is not the use of a
tangential velocity, an SAV reformulation, or a constraint treatment in
isolation. Rather, the contribution is to combine these ingredients in a
single parametric finite element framework in which the physical geometric
energy and the artificial mesh energy are separated, the mesh force is coupled
only to tangential motion, and the nonlinear constraint enforcement is reduced
to a low-dimensional algebraic system.

The main contributions of this work are summarized as follows.

\begin{enumerate}
    \item \textbf{Dual-SAV formulation with tangential mesh regularization.}
    We introduce independent scalar auxiliary variables for the physical
    geometric energy and the artificial mesh regularization energy. The mesh
    regularization force is projected onto the tangential direction, so that it
    changes the parametrization of the curve without modifying the prescribed
    normal geometric evolution at the continuous level.

    \item \textbf{Frozen-metric semi-implicit discretization.}
    We construct a semi-implicit time discretization in which all
    geometry-dependent metric, stabilization, and force terms are evaluated on
    the known curve. Combined with zero-order stabilization, this frozen-metric
    convention yields linear spatial response problems and discrete dissipation
    estimates for the modified geometric and mesh SAV energies.

    \item \textbf{Fully discrete parametric finite element formulation.}
    We formulate the method at the fully discrete level using a parametric
    finite element approximation of the evolving curve. The fully discrete
    system preserves the same algebraic cancellation structure as the
    time-discrete formulation and satisfies corresponding modified-energy
    dissipation estimates.

    \item \textbf{Algebraic block reduction for global constraints.}
    We show that the constrained fully discrete system can be decomposed into a
    small number of symmetric positive-definite spatial response problems and a
    reduced nonlinear algebraic system for the scalar auxiliary variable and
    the Lagrange multipliers. For $K$ global constraints, the nonlinear part
    has dimension $K+1$. In particular, simultaneous area and length
    constraints lead to a three-dimensional nonlinear system, independently of
    the number of mesh vertices.

    \item \textbf{Numerical validation for representative planar curve flows.}
    We test the framework on second- and fourth-order geometric flows,
    including curve shortening, area-preserving curve shortening, curve diffusion,
    and Helfrich-type flows. The experiments illustrate modified-energy
    dissipation, enforcement of geometric constraints, and improved mesh
    quality under tangential redistribution.
\end{enumerate}

To place the present method in context, Table~\ref{tab:intro_method_comparison}
compares the main structural features of several related classes of parametric
and structure-preserving methods. The aim is not to rank these methods, but to
identify the particular combination of properties targeted in this work.

\begin{table}[H]
\centering
\small
\caption{Qualitative comparison of related approaches and the present dual-SAV
PFEM. The comparison focuses on mesh control, energy structure, global
constraints, and the dimension of the nonlinear algebraic part.}
\label{tab:intro_method_comparison}
\begin{tabular}{@{}p{0.12\textwidth}p{0.20\textwidth}p{0.18\textwidth}p{0.19\textwidth}p{0.16\textwidth}@{}}
\hline
Approach
&
Mesh control
&
Energy structure
&
Global constraints
&
Nonlinear algebraic part
\\
\hline
BGN-type parametric FEMs
&
Good mesh properties are obtained through intrinsic weak formulations.
&
Energy-stable formulations are available for several geometric flows.
&
Can be incorporated, depending on the formulation.
&
Typically coupled to the spatial unknowns.
\\
\hline
Prescribed tangential velocity / DeTurck-type methods
&
Tangential motion is explicitly designed to improve parametrization.
&
Energy stability depends on the chosen time discretization.
&
Not the main focus in the basic formulations.
&
Usually not separated from the spatial evolution.
\\
\hline
Discrete-gradient methods with tangential velocities
&
Tangential redistribution can be combined with structure preservation.
&
Original discrete energy dissipation can be preserved.
&
Geometric constraints can be enforced accurately.
&
Fully implicit systems involving spatial degrees of freedom.
\\
\hline
SAV / IEQ-type stabilized schemes
&
Usually formulated in fixed or Eulerian variables; mesh quality is not the
central issue.
&
Modified-energy dissipation is obtained by auxiliary variables.
&
Constrained variants exist, but geometric constraints in parametric form
require additional treatment.
&
Often linear or linearly implicit after auxiliary-variable reformulation.
\\
\hline
Present dual-SAV PFEM
&
An artificial mesh energy drives only tangential redistribution, so its normal
variation is projected out.
&
Separate modified SAV energies are dissipated for the geometric and mesh
subsystems.
&
Nonlinear global constraints are imposed on the updated curve.
&
Reduced to a \((K+1)\)-dimensional nonlinear system after linear response
solves.
\\
\hline
\end{tabular}
\end{table}

The scope of the present paper is restricted to closed planar curves. Some
algebraic components of the method, such as the SAV splitting and the block
reduction of the constraint equations, are formally independent of the ambient
dimension. However, extending the full method to evolving surfaces in
$\mathbb{R}^3$ requires additional surface-mesh regularization mechanisms and a
separate stability and robustness analysis. We therefore regard the present
work as a planar-curve realization of the dual-SAV strategy and leave the
surface case to future work.

\subsection{Outline of the paper}

The remainder of this paper is organized as follows. In Section~\ref{sec:abstract_framework}, we introduce the geometric setting for planar closed curves and describe the abstract ingredients of the dual-energy formulation. Section~\ref{sec:continuous_dual_sav} presents the continuous dual-SAV formulation, including the distinction between the physical geometric auxiliary variable and the virtual mesh auxiliary variable. Section~\ref{sec:dissipation} proves the corresponding continuous modified-energy dissipation identities. In Section~\ref{sec:time_discretization}, we construct a semi-implicit frozen-metric time discretization with zero-order stabilization and establish a discrete SAV dissipation estimate. Section~\ref{sec:spatial_discretization} gives the parametric finite element discretization, proves the fully discrete modified-energy dissipation estimate, and derives the algebraic block reduction for nonlinear global constraints. Section~\ref{sec:numerical_validations} reports numerical experiments for representative constrained geometric flows of planar curves. Finally, Section~\ref{sec:conclusion} summarizes the main findings and discusses limitations and future extensions.

\section{Mathematical abstraction of dual-energy flows}
\label{sec:abstract_framework}

We first introduce the geometric notation and the abstract ingredients used in
the proposed dual-SAV formulation. The aim of this section is not to define a
new class of geometric evolution equations, but to isolate the variational,
metric, stabilization, and constraint structures that will be used in the
continuous formulation, the semi-implicit time discretization, and the fully
discrete parametric finite element scheme.

\subsection{Geometric setting and kinematics}

Let $\mathbb{S}^1:=\mathbb{R}/\mathbb{Z}$ be the reference periodic domain. We
consider a time-dependent closed parametric curve
\[
    \bm{\gamma}:\mathbb{S}^1\times[0,T)\to\mathbb{R}^2,
\]
where $\rho\in\mathbb{S}^1$ is a time-independent parameter and $t$ denotes
time. The prescribed initial curve is denoted by
$\bm{\gamma}_0:=\bm{\gamma}(\cdot,0)$.

For the continuous formulation, we assume that $\bm{\gamma}(\cdot,t)$ is a
regular smooth curve on the time interval under consideration. More precisely,
we assume that $|\partial_\rho\bm{\gamma}(\rho,t)|>0$ and that the curve has
sufficient smoothness for the geometric quantities appearing below to be
well-defined. For example, second-order flows require curvature-level
regularity, whereas fourth-order flows require higher regularity. This
smoothness assumption is used only for the continuous formulation; in the
fully discrete scheme, the curve is replaced by a polygonal parametric finite
element approximation.

The arc-length variable $s$ is defined by
$\mathrm{d}s=|\partial_\rho\bm{\gamma}|\,\mathrm{d}\rho$, and the unit tangent
vector is
\[
    \bm{\tau}
    =
    \partial_s\bm{\gamma}
    =
    \frac{\partial_\rho\bm{\gamma}}{|\partial_\rho\bm{\gamma}|}.
\]
The unit normal vector $\bm{\nu}$ is chosen so that
$(\bm{\tau},\bm{\nu})$ forms a positively oriented orthonormal basis of
$\mathbb{R}^2$. The scalar curvature $\kappa$ is defined by the Frenet--Serret
relations
\[
    \partial_s\bm{\tau}=\kappa\bm{\nu},
    \qquad
    \partial_s\bm{\nu}=-\kappa\bm{\tau}.
\]

Throughout the paper, $\mathbb{V}(\bm{\gamma})$ denotes a real scalar function space on
the evolving curve $\bm{\gamma}$, chosen according to the metric and
stabilization used in the particular flow under consideration. We assume that
$\mathbb{V}(\bm{\gamma})$ is sufficiently regular for all metric bilinear forms,
stabilization bilinear forms, and $L^2(\bm{\gamma})$ duality pairings appearing
below to be well-defined. For example, the pure $L^2$ metric can be formulated
on $L^2(\bm{\gamma})$, while Laplacian and biharmonic stabilization naturally
require $H^1(\bm{\gamma})$ and $H^2(\bm{\gamma})$ regularity, respectively.
For the $H^{-1}$ metric, the normal velocity is taken in the zero-mean class
\[
    \mathbb{V}_0(\bm{\gamma})
    :=
    \left\{
        v\in L^2(\bm{\gamma})
        \;:\;
        \int_{\bm{\gamma}} v\,\mathrm{d}s=0
    \right\},
\]
with the associated mean-zero potential formulation. In the fully discrete
formulation, this continuous space is replaced by the finite element space
$\mathbb{V}_h^n$ on the polygonal curve $\bm{\gamma}_h^n$.

The kinematics of the parametrized curve is described by the velocity field
$\partial_t\bm{\gamma}$. In the present framework, we decompose this velocity
into a normal geometric component and a tangential mesh component:
\[
    \partial_t\bm{\gamma}
    =
    V_\nu\bm{\nu}+V_\tau\bm{\tau}.
\]
Here $V_\nu\in\mathbb{V}(\bm{\gamma})$ determines the evolution of the geometric image
of the curve, whereas $V_\tau\in \mathbb{V}(\bm{\gamma})$ is an artificial tangential
velocity used to redistribute mesh points along the curve. Thus, $V_\tau$
changes the parametrization but not the geometric image of the curve.

\begin{remark}[No topological changes]
Throughout this paper, we assume that the evolving curve
$\bm{\gamma}(\cdot,t)$ does not undergo self-intersection, merging, pinching,
or any other topological change during the time interval under consideration.
Since the method is formulated in an explicit parametric representation, the
topology of the initial closed curve is fixed throughout the computation. Flows
involving topological singularities are outside the scope of the present work.
\end{remark}

\subsection{Abstract components of the dual-energy formulation}
\label{subsec:abstract_components}

We now specify the abstract components used to formulate the dual-SAV system.
These components describe the physical geometric evolution, the artificial mesh
redistribution, the global constraints, and the metric and stabilization
structures used in the subsequent discretization.

\begin{enumerate}
    \item \textbf{Geometric energy.}
    The physical shape evolution is driven by a geometric energy functional
    \(E_{\mathrm{geom}}(\bm{\gamma})\).
    Typical examples are the length energy, the Willmore bending energy, and
    Helfrich-type bending energies. We assume that
    $E_{\mathrm{geom}}$ is invariant under tangential reparametrization of the
    curve. Consequently, its first variation acts only in the normal direction.
    We write the corresponding scalar normal driving force as $w_{\mathrm{g}}$,
    so that the $L^2$-gradient has the form
    \[
        \nabla_{L^2} E_{\mathrm{geom}}
        =
        w_{\mathrm{g}}\bm{\nu}.
    \]

    \item \textbf{Mesh regularization energy.}
    To control the parametrization of the curve, we introduce an artificial
    mesh regularization energy \(E_{\mathrm{mesh}}(\bm{\gamma})\).
    In the examples below, this energy is taken as a weighted Dirichlet-type
    energy in the reference parameter:
    \[
        E_{\mathrm{mesh}}(\bm{\gamma})
        =
        \frac12
        \int_{\mathbb{S}^1}
        w(\rho,\bm{\gamma})
        |\partial_\rho \bm{\gamma}|^2\,d\rho,
        \qquad
        w(\rho,\bm{\gamma})>0 .
    \]
    Unlike the geometric energy, $E_{\mathrm{mesh}}$ depends on the
    parametrization. Its role is not to change the physical shape evolution,
    but to generate an artificial tangential redistribution of mesh points.
    We therefore use only the tangential component of its driving force and
    denote the corresponding scalar tangential force by $w_{\mathrm{m}}$.

    \item \textbf{Global geometric constraints.}
    We impose $K$ global geometric constraints
    \[
        C_i(\bm{\gamma})=0,
        \qquad i=1,\ldots,K .
    \]
    Examples include preservation of enclosed area or perimeter. Since these
    constraints are geometric, their first variations are normal. We write
    \[
        \nabla_{L^2} C_i
        =
        g_i\bm{\nu},
        \qquad i=1,\ldots,K,
    \]
    where $g_i$ denotes the scalar constraint gradient.

    \item \textbf{Metric bilinear forms.}
    The scalar driving forces are converted into velocities through
    state-dependent metric bilinear forms
    \[
        \mathcal{G}_\nu,
        \mathcal{G}_\tau
        :
        \mathbb{V}(\bm{\gamma})\times\mathbb{V}(\bm{\gamma})
        \to \mathbb{R}.
    \]
    The form $\mathcal{G}_\nu$ defines the metric for the physical normal
    gradient flow, while $\mathcal{G}_\tau$ defines the metric for the
    artificial tangential mesh redistribution. We assume that these forms are
    symmetric and positive definite on the relevant scalar function spaces.

    \item \textbf{Stabilization bilinear forms.}
    To stabilize the semi-implicit discretization, we introduce symmetric
    positive semi-definite bilinear forms
    \[
        \mathcal{D}_\nu,
        \mathcal{D}_\tau
        :
        \mathbb{V}(\bm{\gamma})\times\mathbb{V}(\bm{\gamma})
        \to \mathbb{R}.
    \]
    The normal stabilization $\mathcal{D}_\nu$ damps high-frequency modes
    associated with stiff geometric forces, while the tangential stabilization
    $\mathcal{D}_\tau$ damps oscillatory mesh redistribution. These forms are
    spatial stabilization operators. In the time-discrete scheme below, they
    are used in a zero-order-in-time manner, acting on the new velocity itself
    rather than on a velocity difference.
\end{enumerate}

\subsection{Examples of metric and stabilization bilinear forms}
\label{subsec:examples_metric_stabilization}

The abstract formulation above accommodates different physical metrics and
stabilization mechanisms within the same algebraic structure. We list here
typical choices at the continuous level. Their piecewise linear finite element
realizations will be specified later at the matrix level.

For the physical geometric flow, we mainly use the following two metrics on
scalar normal velocities.

\begin{itemize}
\item \textbf{$L^2$ metric.}
For curve shortening, Willmore-type, and Helfrich-type flows, we use the
standard $L^2$ inner product on the curve:
\[
    \mathcal{G}_{L^2,\bm{\gamma}}(U,V)
    :=
    (U,V)_{L^2(\bm{\gamma})}
    =
    \int_{\bm{\gamma}} UV\,\mathrm{d}s .
\]

\item \textbf{$H^{-1}$ metric.}
For the curve diffusion flow, the normal velocity is measured in the
$H^{-1}$ metric. This metric is defined on scalar fields with zero mean over
the curve. For
\[
    \int_{\bm{\gamma}} U\,\mathrm{d}s=0,
    \qquad
    \int_{\bm{\gamma}} V\,\mathrm{d}s=0,
\]
let $\psi_U$ and $\psi_V$ be the mean-zero solutions of
\[
    -\partial_s^2\psi_U=U,
    \qquad
    -\partial_s^2\psi_V=V
    \quad
    \text{on } \bm{\gamma}.
\]
Then we define
\[
    \mathcal{G}_{H^{-1},\bm{\gamma}}(U,V)
    :=
    \int_{\bm{\gamma}}
    \partial_s\psi_U\,\partial_s\psi_V\,\mathrm{d}s .
\]
The zero-mean restriction reflects the fact that the Laplace--Beltrami
operator on a closed curve has constants in its kernel. In the fully discrete
scheme, this restriction will be implemented on the corresponding discrete
zero-mean subspace.
\end{itemize}

For the tangential mesh regularization flow, we use the $L^2$ metric unless
otherwise stated:
\[
    \mathcal{G}_{\tau,\bm{\gamma}}(U,V)
    :=
    \int_{\bm{\gamma}} UV\,\mathrm{d}s .
\]

We next list typical stabilization forms. These terms are not part of the
original geometric gradient flow; rather, they are numerical damping mechanisms
used in the semi-implicit SAV discretization. The only structural assumption
needed in the dissipation analysis is that the stabilization form is symmetric
and positive semi-definite.

\begin{itemize}
\item \textbf{Laplacian stabilization.}
For second-order flows and for tangential mesh redistribution, we use
\[
    \mathcal{D}_{\mathrm{Lap},\bm{\gamma}}(U,V)
    :=
    \beta_2
    \int_{\bm{\gamma}}
    \partial_s U\,\partial_s V\,\mathrm{d}s,
    \qquad
    \beta_2\geq0 .
\]

\item \textbf{Biharmonic-type stabilization.}
For fourth-order geometric flows, we use the continuous prototype
\[
    \mathcal{D}_{\mathrm{bi},\bm{\gamma}}(U,V)
    :=
    \beta_4
    \int_{\bm{\gamma}}
    \partial_s^2 U\,\partial_s^2 V\,\mathrm{d}s,
    \qquad
    \beta_4\geq0 .
\]
This form is only a continuous prototype. Since the fully discrete scheme uses
piecewise linear finite elements, it is not implemented as a conforming
\(H^2\) bilinear form. The corresponding matrix realization is specified in
Section~\ref{sec:spatial_discretization}.

\item \textbf{Hybrid stabilization for nonlocal fourth-order flows.}
For nonlocal fourth-order flows, in particular for the curve diffusion flow,
we also allow the hybrid stabilization
\[
    \mathcal{D}_{\mathrm{hyb},\bm{\gamma}}(U,V)
    :=
    \beta_4
    \int_{\bm{\gamma}}
    \partial_s^2 U\,\partial_s^2 V\,\mathrm{d}s
    +
    \beta_2
    \int_{\bm{\gamma}}
    \partial_s U\,\partial_s V\,\mathrm{d}s,
    \qquad
    \beta_4,\beta_2\geq0 .
\]
The biharmonic component damps high-frequency modes associated with
fourth-order stiffness, while the Laplacian component provides additional
damping of intermediate-frequency oscillations. Since both terms are symmetric
and positive semi-definite, this hybrid choice fits the abstract assumption on
the stabilization form.
\end{itemize}

\begin{remark}[Separation of force, metric, and stabilization]
The thermodynamic force is determined by the geometric energy, while the metric
determines how this force is converted into a velocity. For example, the curve
shortening flow and the curve diffusion flow both use the length energy and
therefore have the same scalar geometric force
\[
    w_{\mathrm{g}}=-\kappa .
\]
The difference lies in the metric: the curve shortening flow uses the $L^2$
metric, whereas the curve diffusion flow uses the $H^{-1}$ metric. The
stabilization form is a further numerical component used to damp explicitly
treated stiffness; it does not change the algebraic cancellation mechanism as
long as it is used consistently and is symmetric positive semi-definite.
\end{remark}

\section{Unified continuous dual-SAV formulation}
\label{sec:continuous_dual_sav}

We now introduce the continuous dual-SAV formulation. As a reference point, we
first write the constrained gradient-flow system before introducing auxiliary
variables. Let
\(V_\nu,V_\tau\in\mathbb{V}(\bm{\gamma})\)
denote the scalar normal and tangential velocity components. The constrained
dual-energy flow seeks \(V_\nu\) and \(V_\tau\) such that, for all test functions
\(\varphi_\nu,\varphi_\tau\in\mathbb{V}(\bm{\gamma})\),
\[
    \mathcal{G}_\nu(V_\nu,\varphi_\nu)
    =
    -(w_{\mathrm{g}},\varphi_\nu)_{L^2(\bm{\gamma})}
    -
    \sum_{i=1}^K
    \lambda_i(t)(g_i,\varphi_\nu)_{L^2(\bm{\gamma})},
\]
and
\[
    \mathcal{G}_\tau(V_\tau,\varphi_\tau)
    =
    -(w_{\mathrm{m}},\varphi_\tau)_{L^2(\bm{\gamma})}.
\]
The global constraints are imposed through
\[
    \frac{\mathrm{d}}{\mathrm{d}t}C_i(\bm{\gamma})
    =
    (g_i,V_\nu)_{L^2(\bm{\gamma})}
    =
    0,
    \qquad
    i=1,\ldots,K.
\]
This formulation separates the normal geometric evolution from the tangential
mesh redistribution. A direct implicit discretization, however, would generally
lead to a nonlinear system involving geometry-dependent forces, metrics, and
Lagrange multipliers. Conversely, an explicit treatment of the nonlinear forces
may impose severe time-step restrictions, especially for fourth-order flows.
The role of the SAV reformulation below is to retain a linear large-scale
spatial structure while preserving a modified energy-dissipation mechanism.

\subsection{Independent scalar auxiliary variables}
\label{subsec:independent_sav_variables}

We introduce the scalar functionals
\[
    S_{\mathrm{g}}(\bm{\gamma})
    :=
    \sqrt{E_{\mathrm{geom}}(\bm{\gamma})+C_{\mathrm{g}}},
    \qquad
    S_{\mathrm{m}}(\bm{\gamma})
    :=
    \sqrt{E_{\mathrm{mesh}}(\bm{\gamma})+C_{\mathrm{m}}},
\]
where \(C_{\mathrm{g}}>0\) and \(C_{\mathrm{m}}>0\) are fixed constants chosen
so that the radicands are positive. We then introduce two time-dependent
scalar auxiliary variables \(r_{\mathrm{g}}(t)\) and \(r_{\mathrm{m}}(t)\).

The variable \(r_{\mathrm{g}}\) is the standard SAV associated with the physical
geometric energy. In contrast, \(r_{\mathrm{m}}\) is used only to represent the
tangential work of the mesh regularization energy. Thus, \(r_{\mathrm{m}}\)
should be interpreted as a virtual auxiliary variable for tangential mesh
redistribution, rather than as an exact tracker of
\(S_{\mathrm{m}}(\bm{\gamma}(t))\) along the full motion.

The auxiliary variables are defined by
\[
\begin{aligned}
    \frac{\mathrm{d}r_{\mathrm{g}}}{\mathrm{d}t}
    &=
    \frac{1}{2S_{\mathrm{g}}(\bm{\gamma})}
    (w_{\mathrm{g}},V_\nu)_{L^2(\bm{\gamma})},
    &
    r_{\mathrm{g}}(0)
    &=
    S_{\mathrm{g}}(\bm{\gamma}_0),
    \\
    \frac{\mathrm{d}r_{\mathrm{m}}}{\mathrm{d}t}
    &=
    \frac{1}{2S_{\mathrm{m}}(\bm{\gamma})}
    (w_{\mathrm{m}},V_\tau)_{L^2(\bm{\gamma})},
    &
    r_{\mathrm{m}}(0)
    &=
    S_{\mathrm{m}}(\bm{\gamma}_0).
\end{aligned}
\]
The first equation is consistent with the usual SAV representation of the
geometric energy, because \(E_{\mathrm{geom}}\) is invariant under tangential
reparametrization and its variation is determined by the normal velocity. The
second equation intentionally uses only the tangential partial work of the mesh
energy. This is the device that prevents the artificial mesh regularization
from contributing to the normal geometric evolution.

\begin{remark}[Virtual mesh auxiliary variable]
For the geometric energy, the chain rule gives
\[
    \frac{\mathrm{d}}{\mathrm{d}t}
    E_{\mathrm{geom}}(\bm{\gamma}(t))
    =
    (w_{\mathrm{g}},V_\nu)_{L^2(\bm{\gamma})}.
\]
Hence, at the continuous level, \(r_{\mathrm{g}}\) remains consistent with
\(S_{\mathrm{g}}(\bm{\gamma})\) as long as
\(r_{\mathrm{g}}(0)=S_{\mathrm{g}}(\bm{\gamma}_0)\). The situation is different
for the mesh regularization energy. Since \(E_{\mathrm{mesh}}\) depends on the
parametrization, its full time derivative would contain contributions from both
the normal and tangential components of the velocity. Including the normal
contribution would allow the artificial mesh energy to affect the physical shape
evolution. We therefore define \(r_{\mathrm{m}}\) through the tangential work
only. Consequently, \(r_{\mathrm{m}}\) is not intended to coincide with
\(S_{\mathrm{m}}(\bm{\gamma}(t))\) for the full motion; it is a virtual driving
variable whose role is to stabilize and scale the tangential mesh redistribution.
\end{remark}

\subsection{Modified continuous dual-SAV system}
\label{subsec:modified_continuous_dual_sav_system}

We now couple the auxiliary variables to the normal and tangential velocity
equations. The geometric and mesh driving forces are scaled by
\[
    \frac{r_{\mathrm{g}}(t)}{S_{\mathrm{g}}(\bm{\gamma})},
    \qquad
    \frac{r_{\mathrm{m}}(t)}{S_{\mathrm{m}}(\bm{\gamma})},
\]
respectively. These two scaling factors have different interpretations. Since
\(r_{\mathrm{g}}(t)\) is consistent with
\(S_{\mathrm{g}}(\bm{\gamma}(t))\) at the continuous level, the factor
\(r_{\mathrm{g}}(t)/S_{\mathrm{g}}(\bm{\gamma})\) is equal to one along smooth
solutions initialized by
\(r_{\mathrm{g}}(0)=S_{\mathrm{g}}(\bm{\gamma}_0)\). Therefore, the geometric
SAV reformulation does not change the normal geometric evolution at the
continuous level. In contrast, \(r_{\mathrm{m}}(t)\) is not intended to coincide
with \(S_{\mathrm{m}}(\bm{\gamma}(t))\) for the full motion. The factor
\(r_{\mathrm{m}}(t)/S_{\mathrm{m}}(\bm{\gamma})\) should instead be viewed as a
dynamic scaling of the tangential mesh relaxation.

To obtain an energy-cancellation structure in the presence of constraints, we
add to the evolution equation for \(r_{\mathrm{g}}\) the constraint-consistent
term
\[
    \sum_{i=1}^K
    \frac{\lambda_i(t)}{2r_{\mathrm{g}}(t)}
    (g_i,V_\nu)_{L^2(\bm{\gamma})}.
\]
This term vanishes for solutions satisfying the constraint dynamics, since
\[
    (g_i,V_\nu)_{L^2(\bm{\gamma})}
    =
    \frac{\mathrm{d}}{\mathrm{d}t}C_i(\bm{\gamma})
    =
    0 .
\]
Thus, it does not alter the continuous constrained flow. Its role is algebraic:
it creates the cancellation of the Lagrange multiplier terms between the
normal velocity equation and the SAV equation in the modified-energy identity.

The modified continuous dual-SAV system is as follows. Find
$\bm{\gamma}(t)$, scalar velocities
$V_\nu,V_\tau\in\mathbb{V}(\bm{\gamma})$, auxiliary variables
$r_{\mathrm{g}}(t),r_{\mathrm{m}}(t)\in\mathbb{R}$, and Lagrange multipliers
$\lambda_i(t)\in\mathbb{R}$, $i=1,\ldots,K$, such that, for all
$\varphi_\nu,\varphi_\tau\in\mathbb{V}(\bm{\gamma})$,
\begin{subequations}
\label{eq:continuous_dual_sav_system}
\begin{align}
    \mathcal{G}_\nu(V_\nu,\varphi_\nu)
    &=
    -
    \frac{r_{\mathrm{g}}(t)}{S_{\mathrm{g}}(\bm{\gamma})}
    (w_{\mathrm{g}},\varphi_\nu)_{L^2(\bm{\gamma})}
    -
    \sum_{i=1}^K
    \lambda_i(t)(g_i,\varphi_\nu)_{L^2(\bm{\gamma})},
    \label{eq:continuous_dual_sav_normal}
    \\
    \frac{\mathrm{d}r_{\mathrm{g}}}{\mathrm{d}t}(t)
    &=
    \frac{1}{2S_{\mathrm{g}}(\bm{\gamma})}
    (w_{\mathrm{g}},V_\nu)_{L^2(\bm{\gamma})}
    +
    \sum_{i=1}^K
    \frac{\lambda_i(t)}{2r_{\mathrm{g}}(t)}
    (g_i,V_\nu)_{L^2(\bm{\gamma})},
    \label{eq:continuous_dual_sav_rg}
    \\
    \mathcal{G}_\tau(V_\tau,\varphi_\tau)
    &=
    -
    \frac{r_{\mathrm{m}}(t)}{S_{\mathrm{m}}(\bm{\gamma})}
    (w_{\mathrm{m}},\varphi_\tau)_{L^2(\bm{\gamma})},
    \label{eq:continuous_dual_sav_tangent}
    \\
    \frac{\mathrm{d}r_{\mathrm{m}}}{\mathrm{d}t}(t)
    &=
    \frac{1}{2S_{\mathrm{m}}(\bm{\gamma})}
    (w_{\mathrm{m}},V_\tau)_{L^2(\bm{\gamma})},
    \label{eq:continuous_dual_sav_rm}
    \\
    \frac{\mathrm{d}}{\mathrm{d}t}C_i(\bm{\gamma})
    &=
    (g_i,V_\nu)_{L^2(\bm{\gamma})}
    =
    0,
    \qquad i=1,\ldots,K .
    \label{eq:continuous_dual_sav_constraints}
\end{align}
\end{subequations}
The initial conditions are
\[
    \bm{\gamma}(\cdot,0)=\bm{\gamma}_0,
    \qquad
    r_{\mathrm{g}}(0)=S_{\mathrm{g}}(\bm{\gamma}_0),
    \qquad
    r_{\mathrm{m}}(0)=S_{\mathrm{m}}(\bm{\gamma}_0).
\]

\begin{remark}[Constraint-consistent coupling]
The additional Lagrange multiplier term in
\eqref{eq:continuous_dual_sav_rg} is identically zero along the constrained
continuous flow. Nevertheless, it is essential for the modified-energy
identity. Indeed, testing \eqref{eq:continuous_dual_sav_normal} with
\(V_\nu\) and multiplying \eqref{eq:continuous_dual_sav_rg} by
\(2r_{\mathrm{g}}(t)\) gives an algebraic cancellation of the scaled geometric
force term and all Lagrange multiplier terms. This is the continuous
counterpart of the block structure used later in the fully discrete scheme.
\end{remark}

\section{Continuous SAV dissipation}
\label{sec:dissipation}

We next show that the continuous dual-SAV system admits two independent
dissipation identities. These identities are stated for the modified SAV energies
\(r_{\mathrm{g}}^2\) and \(r_{\mathrm{m}}^2\).
Thus, the result is a dissipation property of the auxiliary-variable
formulation. It is not a direct dissipation statement for the original energies
\(E_{\mathrm{geom}}\) and \(E_{\mathrm{mesh}}\).

\begin{theorem}[Continuous orthogonal SAV dissipation]
\label{thm:continuous_orthogonal_sav_dissipation}
Assume that the metric forms satisfy the positivity assumptions stated in
Section~\ref{subsec:abstract_components}. Suppose that the solution of the continuous dual-SAV system
\eqref{eq:continuous_dual_sav_system} is sufficiently smooth and that
\(r_{\mathrm{g}}(t)\neq0\) on the time interval under consideration. Then
\begin{align}
    \frac{\mathrm{d}}{\mathrm{d}t}r_{\mathrm{g}}^2(t)
    &=
    -
    \mathcal{G}_\nu(V_\nu,V_\nu)
    \leq 0,
    \label{eq:continuous_geometric_sav_dissipation}
    \\
    \frac{\mathrm{d}}{\mathrm{d}t}r_{\mathrm{m}}^2(t)
    &=
    -
    \mathcal{G}_\tau(V_\tau,V_\tau)
    \leq 0.
    \label{eq:continuous_mesh_sav_dissipation}
\end{align}
In particular, the modified geometric SAV energy and the modified mesh SAV
energy dissipate independently.
\end{theorem}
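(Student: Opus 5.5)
The plan is to verify each identity by direct algebraic cancellation, using the chain rule $\frac{\mathrm{d}}{\mathrm{d}t}r^2=2r\,\frac{\mathrm{d}r}{\mathrm{d}t}$ together with the velocity equations tested by the velocities themselves. No geometric information beyond the structure of \eqref{eq:continuous_dual_sav_system} is needed. The constraint relation \eqref{eq:continuous_dual_sav_constraints} does not even have to be invoked, because the multiplier terms cancel algebraically.

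\textbf{Geometric part.} Since $r_{\mathrm{g}}(t)\neq0$, we may multiply \eqref{eq:continuous_dual_sav_rg} by $2r_{\mathrm{g}}$. This gives
\[
\frac{\mathrm{d}}{\mathrm{d}t}r_{\mathrm{g}}^2
=\frac{r_{\mathrm{g}}}{S_{\mathrm{g}}(\bm{\gamma})}(w_{\mathrm{g}},V_\nu)_{L^2(\bm{\gamma})}
+\sum_{i=1}^K\lambda_i(g_i,V_\nu)_{L^2(\bm{\gamma})}.
\]
Next we take $\varphi_\nu=V_\nu$ in \eqref{eq:continuous_dual_sav_normal}; this is admissible because $V_\nu\in\mathbb{V}(\bm{\gamma})$. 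The right-hand side above is then exactly $-\mathcal{G}_\nu(V_\nu,V_\nu)$. The term $\lambda_i/(2r_{\mathrm{g}})$ in \eqref{eq:continuous_dual_sav_rg} was introduced precisely so that this multiplier cancellation occurs. Nonpositivity follows from the symmetry and positive definiteness of $\mathcal{G}_\nu$ assumed in Section~\ref{subsec:abstract_components}. For the $H^{-1}$ case, we note that $V_\nu$ lies in the zero-mean class, where $\mathcal{G}_{H^{-1}}(V,V)=\|\partial_s\psi_V\|^2_{L^2}\ge0$.

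\textbf{Mesh part.} We multiply \eqref{eq:continuous_dual_sav_rm} by $2r_{\mathrm{m}}$ to get
\[
\frac{\mathrm{d}}{\mathrm{d}t}r_{\mathrm{m}}^2=\frac{r_{\mathrm{m}}}{S_{\mathrm{m}}(\bm{\gamma})}(w_{\mathrm{m}},V_\tau)_{L^2(\bm{\gamma})}.
\]
Testing \eqref{eq:continuous_dual_sav_tangent} with $\varphi_\tau=V_\tau$ identifies this with $-\mathcal{G}_\tau(V_\tau,V_\tau)\le0$. This step does not need $r_{\mathrm{m}}\neq0$. The two identities involve disjoint equations, which is the sense in which they are independent.

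\textbf{Main obstacle.} There is essentially none; the argument is formal. The only care needed is justifying the test-function choices and the differentiability of $r^2$. The assumed smoothness of the solution ensures that $V_\nu,V_\tau$ belong to the test space, that the pairings are finite, and that $r_{\mathrm{g}},r_{\mathrm{m}}$ are $C^1$. I would also remark that $S_{\mathrm{g}},S_{\mathrm{m}}>0$ by the choice of $C_{\mathrm{g}},C_{\mathrm{m}}$, so every coefficient is well defined.
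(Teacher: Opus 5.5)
Your proposal is correct and follows the paper's proof essentially verbatim. You multiply the auxiliary-variable equations by $2r_{\mathrm{g}}$ and $2r_{\mathrm{m}}$, test the normal and tangential velocity equations with $V_\nu$ and $V_\tau$, and add, so that the scaled force terms and all multiplier terms cancel, leaving $-\mathcal{G}_\nu(V_\nu,V_\nu)\le 0$ and $-\mathcal{G}_\tau(V_\tau,V_\tau)\le 0$ by positive definiteness. One cosmetic point: the hypothesis $r_{\mathrm{g}}\neq0$ is what makes the $\lambda_i/(2r_{\mathrm{g}})$ term in \eqref{eq:continuous_dual_sav_rg} well defined, not what licenses multiplying by $2r_{\mathrm{g}}$.
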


\begin{proof}
We prove the two identities separately.

First, we consider the normal geometric subsystem. Multiplying
\eqref{eq:continuous_dual_sav_rg} by $2r_{\mathrm{g}}(t)$ gives
\[
    \frac{\mathrm{d}}{\mathrm{d}t}r_{\mathrm{g}}^2(t)
    =
    \frac{r_{\mathrm{g}}(t)}{S_{\mathrm{g}}(\bm{\gamma})}
    (w_{\mathrm{g}},V_\nu)_{L^2(\bm{\gamma})}
    +
    \sum_{i=1}^K
    \lambda_i(t)
    (g_i,V_\nu)_{L^2(\bm{\gamma})}.
\]
On the other hand, choosing $\varphi_\nu=V_\nu$ in
\eqref{eq:continuous_dual_sav_normal} yields
\[
    \mathcal{G}_\nu(V_\nu,V_\nu)
    =
    -
    \frac{r_{\mathrm{g}}(t)}{S_{\mathrm{g}}(\bm{\gamma})}
    (w_{\mathrm{g}},V_\nu)_{L^2(\bm{\gamma})}
    -
    \sum_{i=1}^K
    \lambda_i(t)
    (g_i,V_\nu)_{L^2(\bm{\gamma})}.
\]
Adding these two relations, the scaled geometric force term and all Lagrange
multiplier terms cancel. Therefore,
\[
    \frac{\mathrm{d}}{\mathrm{d}t}r_{\mathrm{g}}^2(t)
    +
    \mathcal{G}_\nu(V_\nu,V_\nu)
    =
    0.
\]
Since $\mathcal{G}_\nu$ is positive definite on the relevant scalar function
space, we obtain
\[
    \frac{\mathrm{d}}{\mathrm{d}t}r_{\mathrm{g}}^2(t)
    =
    -
    \mathcal{G}_\nu(V_\nu,V_\nu)
    \leq 0.
\]

Next, we consider the tangential mesh subsystem. Multiplying
\eqref{eq:continuous_dual_sav_rm} by $2r_{\mathrm{m}}(t)$ gives
\[
    \frac{\mathrm{d}}{\mathrm{d}t}r_{\mathrm{m}}^2(t)
    =
    \frac{r_{\mathrm{m}}(t)}{S_{\mathrm{m}}(\bm{\gamma})}
    (w_{\mathrm{m}},V_\tau)_{L^2(\bm{\gamma})}.
\]
Choosing $\varphi_\tau=V_\tau$ in
\eqref{eq:continuous_dual_sav_tangent} gives
\[
    \mathcal{G}_\tau(V_\tau,V_\tau)
    =
    -
    \frac{r_{\mathrm{m}}(t)}{S_{\mathrm{m}}(\bm{\gamma})}
    (w_{\mathrm{m}},V_\tau)_{L^2(\bm{\gamma})}.
\]
Adding the two identities yields
\[
    \frac{\mathrm{d}}{\mathrm{d}t}r_{\mathrm{m}}^2(t)
    +
    \mathcal{G}_\tau(V_\tau,V_\tau)
    =
    0.
\]
Since $\mathcal{G}_\tau$ is positive definite, we conclude that
\[
    \frac{\mathrm{d}}{\mathrm{d}t}r_{\mathrm{m}}^2(t)
    =
    -
    \mathcal{G}_\tau(V_\tau,V_\tau)
    \leq 0.
\]
This completes the proof.
\end{proof}

\begin{remark}[Meaning of the orthogonal dissipation]
Theorem~\ref{thm:continuous_orthogonal_sav_dissipation} shows that the normal
geometric part and the tangential mesh part are dissipated through different
metric structures. The cancellation in the normal subsystem uses the
constraint-consistent coupling in the $r_{\mathrm{g}}$ equation, whereas the
tangential subsystem is independent of the Lagrange multipliers. Consequently,
the mesh redistribution can be stabilized through the tangential equation
without introducing an artificial normal force into the geometric evolution.
\end{remark}

\section{Semi-implicit temporal discretization}
\label{sec:time_discretization}

We now construct a semi-implicit time discretization of the continuous
dual-SAV system \eqref{eq:continuous_dual_sav_system}. The purpose of this
section is to identify the time-discrete algebraic structure that preserves
the SAV cancellation mechanism. The fully discrete parametric finite element
formulation and its corresponding dissipation estimate will be given in
Section~\ref{sec:spatial_discretization}.

The discretization is based on two ingredients. The first is a frozen-metric
convention: during one time step, all geometry-dependent inner products,
metric bilinear forms, stabilization bilinear forms, driving forces, and
constraint gradients are evaluated on the known curve at time level $n$. The
second is a zero-order-in-time use of the spatial stabilization forms
introduced in Section~\ref{sec:abstract_framework}. Namely, the stabilization
terms act on the new velocity itself, rather than on a difference of
velocities. This choice is essential for retaining the algebraic cancellation
used in the modified-energy estimate and avoids propagating velocity history
through the stabilization operator.

\subsection{Frozen metric and discrete notation}
\label{subsec:frozen_metric_discrete_notation}

Let \(\Delta t>0\) be the time step size and set
\(t^n:=n\Delta t\). The approximation of the curve at time \(t^n\) is denoted
by \(\bm{\gamma}^n\). Throughout the time step from \(t^n\) to \(t^{n+1}\),
all state-dependent quantities are evaluated on \(\bm{\gamma}^n\) and treated
as fixed.

For notational simplicity, we write
\[
    (u,v)^n
    :=
    (u,v)_{L^2(\bm{\gamma}^n)} .
\]
The frozen metric bilinear forms are denoted by
\(\mathcal{G}_\nu^n,\mathcal{G}_\tau^n\), and the frozen stabilization
bilinear forms by
\(\mathcal{D}_\nu^n,\mathcal{D}_\tau^n\). The geometric and mesh driving
forces evaluated on \(\bm{\gamma}^n\) are denoted by
\(w_{\mathrm{g}}^n,w_{\mathrm{m}}^n\), and the scalar constraint gradients by
\(g_i^n\), \(i=1,\ldots,K\). We also set
\[
    S_{\mathrm{g}}^n:=S_{\mathrm{g}}(\bm{\gamma}^n),
    \qquad
    S_{\mathrm{m}}^n:=S_{\mathrm{m}}(\bm{\gamma}^n).
\]

The scalar function space on the frozen curve is denoted by
\(\mathbb{V}^n:=\mathbb{V}(\bm{\gamma}^n)\). The unknown scalar normal and
tangential velocities at the next time step are sought in this frozen space:
\(V_\nu^{n+1},V_\tau^{n+1}\in\mathbb{V}^n\). After these velocities are
computed, the curve is updated by
\[
    \bm{\gamma}^{n+1}
    =
    \bm{\gamma}^{n}
    +
    \Delta t
    \left(
        V_\nu^{n+1}\bm{\nu}^{n}
        +
        V_\tau^{n+1}\bm{\tau}^{n}
    \right),
\]
where \(\bm{\nu}^{n}\) and \(\bm{\tau}^{n}\) are the unit normal and tangent
vectors associated with the frozen curve \(\bm{\gamma}^{n}\).

\subsection{Zero-order stabilized dual-SAV scheme}
\label{subsec:zero_order_stabilized_dual_sav_scheme}

Given \(\bm{\gamma}^n\), \(r_{\mathrm{g}}^n\), and \(r_{\mathrm{m}}^n\), we seek
\(V_\nu^{n+1},V_\tau^{n+1}\in\mathbb{V}^n\),
\(r_{\mathrm{g}}^{n+1},r_{\mathrm{m}}^{n+1}\in\mathbb{R}\), and
\(\lambda_i^{n+1}\in\mathbb{R}\), \(i=1,\ldots,K\), such that, for all test
functions \(\varphi_\nu,\varphi_\tau\in\mathbb{V}^n\),
\begin{subequations}
\label{eq:time_discrete_dual_sav_scheme}
\begin{align}
    \mathcal{G}_\nu^n
    (V_\nu^{n+1},\varphi_\nu)
    +
    \Delta t\,
    \mathcal{D}_\nu^n
    (V_\nu^{n+1},\varphi_\nu)
    &=
    -
    \frac{r_{\mathrm{g}}^{n+1}}{S_{\mathrm{g}}^n}
    (w_{\mathrm{g}}^n,\varphi_\nu)^n
    -
    \sum_{i=1}^K
    \lambda_i^{n+1}
    (g_i^n,\varphi_\nu)^n,
    \label{eq:time_discrete_normal}
    \\
    \frac{r_{\mathrm{g}}^{n+1}-r_{\mathrm{g}}^n}{\Delta t}
    &=
    \frac{1}{2S_{\mathrm{g}}^n}
    (w_{\mathrm{g}}^n,V_\nu^{n+1})^n
    +
    \sum_{i=1}^K
    \frac{\lambda_i^{n+1}}{2r_{\mathrm{g}}^{n+1}}
    (g_i^n,V_\nu^{n+1})^n,
    \label{eq:time_discrete_rg}
    \\
    \mathcal{G}_\tau^n
    (V_\tau^{n+1},\varphi_\tau)
    +
    \Delta t\,
    \mathcal{D}_\tau^n
    (V_\tau^{n+1},\varphi_\tau)
    &=
    -
    \frac{r_{\mathrm{m}}^{n+1}}{S_{\mathrm{m}}^n}
    (w_{\mathrm{m}}^n,\varphi_\tau)^n,
    \label{eq:time_discrete_tangent}
    \\
    \frac{r_{\mathrm{m}}^{n+1}-r_{\mathrm{m}}^n}{\Delta t}
    &=
    \frac{1}{2S_{\mathrm{m}}^n}
    (w_{\mathrm{m}}^n,V_\tau^{n+1})^n .
    \label{eq:time_discrete_rm}
\end{align}
The nonlinear global constraints are imposed on the updated curve:
\begin{equation}
    C_i\!\left(
        \bm{\gamma}^n
        +
        \Delta t
        \left(
            V_\nu^{n+1}\bm{\nu}^n
            +
            V_\tau^{n+1}\bm{\tau}^n
        \right)
    \right)
    =
    0,
    \qquad
    i=1,\ldots,K.
    \label{eq:time_discrete_constraints}
\end{equation}
\end{subequations}

The terms
\[
    \Delta t\,\mathcal{D}_\nu^n(V_\nu^{n+1},\varphi_\nu),
    \qquad
    \Delta t\,\mathcal{D}_\tau^n(V_\tau^{n+1},\varphi_\tau)
\]
are zero-order damping terms in time. They act on the new velocities
\(V_\nu^{n+1}\) and \(V_\tau^{n+1}\) themselves and do not involve velocity
differences such as \(V_\nu^{n+1}-V_\nu^n\) or
\(V_\tau^{n+1}-V_\tau^n\). Thus, no velocity history is propagated through the
stabilization operator. This choice is essential for the algebraic SAV
cancellation used in the dissipation estimate below.

\begin{remark}[Meaning of zero-order stabilization]
The term ``zero-order'' refers to the temporal use of the stabilization
operator, not to the spatial differential order of
$\mathcal{D}_\nu^n$ or $\mathcal{D}_\tau^n$. The bilinear forms may contain
first- or second-order spatial derivatives, such as Laplacian or biharmonic
stabilization. In the time-discrete scheme, however, they enter as absolute
damping terms applied to the new velocity. This is different from stabilization
strategies that propagate velocity history through terms involving
$V^{n+1}-V^n$.
\end{remark}

\subsection{Discrete SAV dissipation}

We now prove that the zero-order stabilized time-discrete scheme satisfies
discrete dissipation estimates for the modified SAV energies. The proof relies
on the frozen-metric convention and on the consistent use of the same frozen
inner products in the velocity equations and in the SAV update equations.

\begin{theorem}[Time-discrete modified-energy dissipation]
\label{thm:time_discrete_modified_energy_dissipation}
Under the assumptions on the metric and stabilization forms stated in
Sections~\ref{subsec:abstract_components} and
\ref{subsec:examples_metric_stabilization}, suppose that
$r_{\mathrm{g}}^{n+1}\neq 0$ and that
$
    V_\nu^{n+1}
$,
$
    V_\tau^{n+1}
$,
$
    r_{\mathrm{g}}^{n+1}
$,
$
    r_{\mathrm{m}}^{n+1}
$, 
$
    \lambda_1^{n+1},\ldots,\lambda_K^{n+1}
$
satisfy the time-discrete scheme
\eqref{eq:time_discrete_dual_sav_scheme}.
Then
\begin{align}
    \frac{
        (r_{\mathrm{g}}^{n+1})^2
        -
        (r_{\mathrm{g}}^n)^2
    }{\Delta t}
    &\leq
    -
    \mathcal{G}_\nu^n(V_\nu^{n+1},V_\nu^{n+1})
    -
    \Delta t\,
    \mathcal{D}_\nu^n(V_\nu^{n+1},V_\nu^{n+1})
    \leq 0,
    \label{eq:time_discrete_geometric_sav_dissipation}
    \\
    \frac{
        (r_{\mathrm{m}}^{n+1})^2
        -
        (r_{\mathrm{m}}^n)^2
    }{\Delta t}
    &\leq
    -
    \mathcal{G}_\tau^n(V_\tau^{n+1},V_\tau^{n+1})
    -
    \Delta t\,
    \mathcal{D}_\tau^n(V_\tau^{n+1},V_\tau^{n+1})
    \leq 0.
    \label{eq:time_discrete_mesh_sav_dissipation}
\end{align}
Consequently, the time-discrete modified geometric and mesh SAV energies are
nonincreasing for any time step size for which the discrete system is solvable.
\end{theorem}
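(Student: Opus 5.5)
The plan is to mimic the continuous argument of Theorem~\ref{thm:continuous_orthogonal_sav_dissipation} and add the single new ingredient needed in discrete time: the elementary identity
\[
    a^2-b^2 = 2a(a-b)-(a-b)^2 ,
\]
applied with $a=r_{\mathrm{g}}^{n+1}$, $b=r_{\mathrm{g}}^n$ (and likewise for $r_{\mathrm{m}}$). This gives
\[
    \frac{(r_{\mathrm{g}}^{n+1})^2-(r_{\mathrm{g}}^n)^2}{\Delta t}
    \le 2r_{\mathrm{g}}^{n+1}\,\frac{r_{\mathrm{g}}^{n+1}-r_{\mathrm{g}}^n}{\Delta t},
\]
and the discarded term $-(r_{\mathrm{g}}^{n+1}-r_{\mathrm{g}}^n)^2/\Delta t\le 0$ is the source of the inequality (rather than the equality of the continuous case). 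The same inequality holds for the mesh variable.

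For the geometric part, I multiply \eqref{eq:time_discrete_rg} by $2r_{\mathrm{g}}^{n+1}$. This step is allowed because $r_{\mathrm{g}}^{n+1}\neq 0$, which is needed anyway for the Lagrange multiplier term to make sense. The result is
\[
    2r_{\mathrm{g}}^{n+1}\frac{r_{\mathrm{g}}^{n+1}-r_{\mathrm{g}}^n}{\Delta t}
    =\frac{r_{\mathrm{g}}^{n+1}}{S_{\mathrm{g}}^n}(w_{\mathrm{g}}^n,V_\nu^{n+1})^n
    +\sum_{i=1}^K\lambda_i^{n+1}(g_i^n,V_\nu^{n+1})^n .
\]
Next I test \eqref{eq:time_discrete_normal} with $\varphi_\nu=V_\nu^{n+1}$, which is admissible since $V_\nu^{n+1}\in\mathbb{V}^n$ (in the $H^{-1}$ case it lies in the zero-mean subspace). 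Because both equations use the same frozen pairing $(\cdot,\cdot)^n$, the same $S_{\mathrm{g}}^n$, the same $r_{\mathrm{g}}^{n+1}$ and the same $\lambda_i^{n+1}$, adding the two relations cancels the force and multiplier terms exactly. What remains is
\[
    2r_{\mathrm{g}}^{n+1}\frac{r_{\mathrm{g}}^{n+1}-r_{\mathrm{g}}^n}{\Delta t}
    =-\mathcal{G}_\nu^n(V_\nu^{n+1},V_\nu^{n+1})-\Delta t\,\mathcal{D}_\nu^n(V_\nu^{n+1},V_\nu^{n+1}).
\]
Combining this with the elementary inequality yields \eqref{eq:time_discrete_geometric_sav_dissipation}. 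The final sign $\le 0$ follows from the positive definiteness of $\mathcal{G}_\nu^n$ and the positive semi-definiteness of $\mathcal{D}_\nu^n$, together with $\Delta t>0$.

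The mesh part is identical but simpler, since no multipliers appear. I multiply \eqref{eq:time_discrete_rm} by $2r_{\mathrm{m}}^{n+1}$, test \eqref{eq:time_discrete_tangent} with $V_\tau^{n+1}$, add the two relations, and apply the same identity. Monotonicity of the modified energies then follows at once.

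The only point requiring care, rather than a real obstacle, is the consistency of the discrete structure. The nonlinear constraint \eqref{eq:time_discrete_constraints} plays no role in the estimate, so no constraint information is needed. The cancellation relies only on the implicit $r_{\mathrm{g}}^{n+1}$ and $\lambda_i^{n+1}$ appearing identically in both equations, and on the zero-order form of the stabilization, so that testing produces $\mathcal{D}_\nu^n(V_\nu^{n+1},V_\nu^{n+1})$ and not a cross term with $V_\nu^n$.
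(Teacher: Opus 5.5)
Your proposal is correct and follows essentially the same route as the paper. Both arguments multiply the SAV update by $2r^{n+1}$, test the velocity equation with $V^{n+1}$, and add the two relations so that the scaled force and multiplier terms cancel; the nonnegative increment $(r^{n+1}-r^n)^2/\Delta t$ from $2a(a-b)=a^2-b^2+(a-b)^2$ is then dropped, for the geometric and mesh variables alike.
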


\begin{proof}
We use the elementary identity
\[
    2a(a-b)=a^2-b^2+(a-b)^2 .
\]

We first consider the normal geometric subsystem. Taking
$\varphi_\nu=V_\nu^{n+1}$ in \eqref{eq:time_discrete_normal} gives
\begin{align}
    \mathcal{G}_\nu^n(V_\nu^{n+1},V_\nu^{n+1})
    +
    \Delta t\,
    \mathcal{D}_\nu^n(V_\nu^{n+1},V_\nu^{n+1})
    &=
    -
    \frac{r_{\mathrm{g}}^{n+1}}{S_{\mathrm{g}}^n}
    (w_{\mathrm{g}}^n,V_\nu^{n+1})^n
    -
    \sum_{i=1}^K
    \lambda_i^{n+1}
    (g_i^n,V_\nu^{n+1})^n .
    \label{eq:proof_time_discrete_normal_test}
\end{align}
Multiplying \eqref{eq:time_discrete_rg} by
$2r_{\mathrm{g}}^{n+1}$ and using the identity above yields
\begin{align}
    \frac{
        (r_{\mathrm{g}}^{n+1})^2
        -
        (r_{\mathrm{g}}^n)^2
        +
        (r_{\mathrm{g}}^{n+1}-r_{\mathrm{g}}^n)^2
    }{\Delta t}
    &=
    \frac{r_{\mathrm{g}}^{n+1}}{S_{\mathrm{g}}^n}
    (w_{\mathrm{g}}^n,V_\nu^{n+1})^n
    +
    \sum_{i=1}^K
    \lambda_i^{n+1}
    (g_i^n,V_\nu^{n+1})^n .
    \label{eq:proof_time_discrete_rg}
\end{align}
Adding \eqref{eq:proof_time_discrete_normal_test} and
\eqref{eq:proof_time_discrete_rg}, the scaled geometric force term and all
Lagrange multiplier terms cancel. Hence
\[
    \frac{
        (r_{\mathrm{g}}^{n+1})^2
        -
        (r_{\mathrm{g}}^n)^2
    }{\Delta t}
    +
    \frac{
        (r_{\mathrm{g}}^{n+1}-r_{\mathrm{g}}^n)^2
    }{\Delta t}
    +
    \mathcal{G}_\nu^n(V_\nu^{n+1},V_\nu^{n+1})
    +
    \Delta t\,
    \mathcal{D}_\nu^n(V_\nu^{n+1},V_\nu^{n+1})
    =
    0 .
\]
By the positivity properties of the metric and stabilization forms, the last three terms are
nonnegative. Dropping the nonnegative temporal variation term gives
\eqref{eq:time_discrete_geometric_sav_dissipation}.

The tangential mesh subsystem is treated in the same way. Taking
$\varphi_\tau=V_\tau^{n+1}$ in \eqref{eq:time_discrete_tangent} gives
\[
    \mathcal{G}_\tau^n(V_\tau^{n+1},V_\tau^{n+1})
    +
    \Delta t\,
    \mathcal{D}_\tau^n(V_\tau^{n+1},V_\tau^{n+1})
    =
    -
    \frac{r_{\mathrm{m}}^{n+1}}{S_{\mathrm{m}}^n}
    (w_{\mathrm{m}}^n,V_\tau^{n+1})^n .
\]
Multiplying \eqref{eq:time_discrete_rm} by
$2r_{\mathrm{m}}^{n+1}$ yields
\[
    \frac{
        (r_{\mathrm{m}}^{n+1})^2
        -
        (r_{\mathrm{m}}^n)^2
        +
        (r_{\mathrm{m}}^{n+1}-r_{\mathrm{m}}^n)^2
    }{\Delta t}
    =
    \frac{r_{\mathrm{m}}^{n+1}}{S_{\mathrm{m}}^n}
    (w_{\mathrm{m}}^n,V_\tau^{n+1})^n .
\]
Adding these two identities, we obtain
\[
    \frac{
        (r_{\mathrm{m}}^{n+1})^2
        -
        (r_{\mathrm{m}}^n)^2
    }{\Delta t}
    +
    \frac{
        (r_{\mathrm{m}}^{n+1}-r_{\mathrm{m}}^n)^2
    }{\Delta t}
    +
    \mathcal{G}_\tau^n(V_\tau^{n+1},V_\tau^{n+1})
    +
    \Delta t\,
    \mathcal{D}_\tau^n(V_\tau^{n+1},V_\tau^{n+1})
    =
    0 .
\]
Using the positivity properties of the metric and stabilization forms, and dropping the nonnegative
temporal variation term, we obtain
\eqref{eq:time_discrete_mesh_sav_dissipation}. This completes the proof.
\end{proof}

\begin{remark}[What is dissipated]
Theorem~\ref{thm:time_discrete_modified_energy_dissipation} gives a monotonicity
result for the modified SAV quantities
$(r_{\mathrm{g}}^n)^2$ and $(r_{\mathrm{m}}^n)^2$. These quantities should not
be identified automatically with the original energies
$E_{\mathrm{geom}}(\bm{\gamma}^n)+C_{\mathrm{g}}$ and
$E_{\mathrm{mesh}}(\bm{\gamma}^n)+C_{\mathrm{m}}$ at the time-discrete level.
The relation between the modified SAV energies and the original geometric
energies is therefore monitored in the numerical experiments when relevant.
\end{remark}

\begin{remark}[Role of nonlinear constraints]
The nonlinear constraints \eqref{eq:time_discrete_constraints} are part of the
time-discrete system. However, they are not used directly in the algebraic
proof of Theorem~\ref{thm:time_discrete_modified_energy_dissipation}. The
Lagrange multiplier terms cancel because of the coupling between the SAV
equation \eqref{eq:time_discrete_rg} and the normal velocity equation
\eqref{eq:time_discrete_normal}. The constraints determine the Lagrange
multipliers through the nonlinear algebraic system obtained after spatial
discretization.
\end{remark}

\section{Spatial discretization and algebraic decoupling}
\label{sec:spatial_discretization}

We now pass from the time-discrete variational formulation to a fully discrete
parametric finite element scheme. The purpose of this section is twofold. First,
we show that the algebraic cancellation structure of the dual-SAV formulation is
preserved after spatial discretization. Second, we derive an algebraic block
reduction that separates the large linear spatial response from the
low-dimensional nonlinear constraint enforcement.

The resulting fully discrete system contains the spatial velocity coefficients,
the scalar auxiliary variables, and the Lagrange multipliers. A direct
monolithic solution would lead to a nonlinear algebraic system involving all
spatial degrees of freedom. The block reduction derived below does not modify
the fully discrete equations. Rather, it exploits the linear dependence of the
velocity equations on the scalar unknowns and reduces the nonlinear part to a
system for the geometric SAV variable and the Lagrange multipliers.

\subsection{Parametric finite element approximation}

Let $\bm{\gamma}_h^n$ be a polygonal approximation of the closed curve at time
level $n$. We write its vertices as
\[
    \{\mathbf{x}_j^n\}_{j=1}^N\subset\mathbb{R}^2,
    \qquad
    \mathbf{x}_{N+1}^n=\mathbf{x}_1^n,
\]
and connect consecutive vertices by straight line segments. The corresponding
periodic piecewise linear finite element space is denoted by
$\mathbb{V}_h^n\subset\mathbb{V}^n$. It is spanned by the nodal basis functions
$\{\chi_j^n\}_{j=1}^N$, which satisfy
\[
    \chi_j^n(\mathbf{x}_k^n)=\delta_{jk},
    \qquad
    j,k=1,\ldots,N .
\]

We approximate the scalar normal and tangential velocities by
\[
    V_{\nu,h}^{n+1}
    =
    \sum_{j=1}^N
    [\mathbf{V}_\nu^{n+1}]_j\,\chi_j^n,
    \qquad
    V_{\tau,h}^{n+1}
    =
    \sum_{j=1}^N
    [\mathbf{V}_\tau^{n+1}]_j\,\chi_j^n,
\]
where
$\mathbf{V}_\nu^{n+1},\mathbf{V}_\tau^{n+1}\in \mathbb{R}^N$
are the coefficient vectors.

For each edge, we define the edge vector, length, and unit tangent by
\[
    \mathbf{e}_j^n
    :=
    \mathbf{x}_{j+1}^n-\mathbf{x}_j^n,
    \qquad
    l_j^n
    :=
    |\mathbf{e}_j^n|,
    \qquad
    \bm{\tau}_{j+1/2}^n
    :=
    \frac{\mathbf{e}_j^n}{l_j^n}.
\]
The nodal tangent is then defined by the normalized average of the two adjacent
edge tangents,
\[
    \bm{\tau}_j^n
    :=
    \frac{
        \bm{\tau}_{j-1/2}^n+\bm{\tau}_{j+1/2}^n
    }{
        \left|
        \bm{\tau}_{j-1/2}^n+\bm{\tau}_{j+1/2}^n
        \right|
    },
    \qquad j=1,\ldots,N,
\]
whenever the denominator is nonzero. We assume that the polygonal curve is
regular in the sense that this denominator does not vanish. The nodal normal is
defined by applying the same $\pi/2$ rotation convention as in the continuous
setting:
\[
    \bm{\nu}_j^n
    :=
    \mathbf{R}_{\pi/2}\bm{\tau}_j^n,
\]
where $\mathbf{R}_{\pi/2}$ is chosen so that
$(\bm{\tau}_j^n,\bm{\nu}_j^n)$ is a positively oriented orthonormal basis.
Other consistent choices of nodal tangents and normals are possible, but the
fully discrete algebraic structure derived below only requires that these
vectors are fixed during the time step and computed from the known curve
$\bm{\gamma}_h^n$.

The updated polygonal curve is defined by
\[
    \bm{\gamma}_h^{n+1}
    =
    \bm{\gamma}_h^n
    +
    \Delta t
    \left(
        V_{\nu,h}^{n+1}\bm{\nu}_h^n
        +
        V_{\tau,h}^{n+1}\bm{\tau}_h^n
    \right).
\]
Equivalently, at the nodal level,
\[
    \mathbf{x}_j^{n+1}
    =
    \mathbf{x}_j^n
    +
    \Delta t
    \left(
        [\mathbf{V}_\nu^{n+1}]_j\bm{\nu}_j^n
        +
        [\mathbf{V}_\tau^{n+1}]_j\bm{\tau}_j^n
    \right),
    \qquad
    j=1,\ldots,N.
\]

The frozen metric and stabilization forms are restricted to the finite element
space $\mathbb{V}_h^n$. For all test functions
$\chi_k^n\in\mathbb{V}_h^n$, the fully discrete velocity equations
corresponding to \eqref{eq:time_discrete_normal} and
\eqref{eq:time_discrete_tangent} read
\begin{align}
    \mathcal{G}_{\nu,h}^n
    (V_{\nu,h}^{n+1},\chi_k^n)
    +
    \Delta t\,
    \mathcal{D}_{\nu,h}^n
    (V_{\nu,h}^{n+1},\chi_k^n)
    &=
    -
    \frac{r_{\mathrm{g},h}^{n+1}}{S_{\mathrm{g},h}^n}
    (w_{\mathrm{g},h}^n,\chi_k^n)_h^n
    -
    \sum_{i=1}^K
    \lambda_{i,h}^{n+1}
    (g_{i,h}^n,\chi_k^n)_h^n,
    \label{eq:fully_discrete_normal_variational}
    \\
    \mathcal{G}_{\tau,h}^n
    (V_{\tau,h}^{n+1},\chi_k^n)
    +
    \Delta t\,
    \mathcal{D}_{\tau,h}^n
    (V_{\tau,h}^{n+1},\chi_k^n)
    &=
    -
    \frac{r_{\mathrm{m},h}^{n+1}}{S_{\mathrm{m},h}^n}
    (w_{\mathrm{m},h}^n,\chi_k^n)_h^n .
    \label{eq:fully_discrete_tangent_variational}
\end{align}
Here
$S_{\mathrm{g},h}^n:=S_{\mathrm{g}}(\bm{\gamma}_h^n)$,
$S_{\mathrm{m},h}^n:=S_{\mathrm{m}}(\bm{\gamma}_h^n)$,
and $(\cdot,\cdot)_h^n$ denotes the discrete $L^2$ inner product on
$\bm{\gamma}_h^n$. The quantities
$w_{\mathrm{g},h}^n$, $w_{\mathrm{m},h}^n$, and $g_{i,h}^n$ are evaluated
explicitly on the frozen geometry $\bm{\gamma}_h^n$. Consistent with the
zero-order stabilized time discretization, no velocity-history term appears in
\eqref{eq:fully_discrete_normal_variational} or
\eqref{eq:fully_discrete_tangent_variational}.

\subsection{Fully discrete matrix formulation}
\label{subsec:fully_discrete_matrix_formulation}

We now rewrite the finite element velocity equations in matrix form. Define the
metric matrices
\(\mathbf{A}_{\nu,h}^n,\mathbf{A}_{\tau,h}^n\in\mathbb{R}^{N\times N}\)
by
\[
    [\mathbf{A}_{\nu,h}^n]_{kj}
    :=
    \mathcal{G}_{\nu,h}^n(\chi_j^n,\chi_k^n),
    \qquad
    [\mathbf{A}_{\tau,h}^n]_{kj}
    :=
    \mathcal{G}_{\tau,h}^n(\chi_j^n,\chi_k^n).
\]
Similarly, define the stabilization matrices
\(\mathbf{D}_{\nu,h}^n,\mathbf{D}_{\tau,h}^n\in\mathbb{R}^{N\times N}\)
by
\[
    [\mathbf{D}_{\nu,h}^n]_{kj}
    :=
    \mathcal{D}_{\nu,h}^n(\chi_j^n,\chi_k^n),
    \qquad
    [\mathbf{D}_{\tau,h}^n]_{kj}
    :=
    \mathcal{D}_{\tau,h}^n(\chi_j^n,\chi_k^n).
\]

We next specify the matrix realization of the nonlocal \(H^{-1}\) metric and
of the biharmonic-type stabilization used below. Let \(\mathbf{K}_h^n\) be the
stiffness matrix on the frozen polygonal curve \(\bm{\gamma}_h^n\), and let
\(\mathbf{M}_{\mathrm{L},h}^n\) be the lumped mass matrix. We denote by
\(\mathbf{m}^n\) the vector of diagonal entries of \(\mathbf{M}_{\mathrm{L},h}^n\). Since
\(\mathbf{K}_h^n\) is symmetric positive semi-definite and has constants in
its kernel, the \(H^{-1}\) normal metric is used on the lumped-mass discrete
zero-mean subspace
\[
    \mathbb{V}_{h,0}^n
    :=
    \left\{
        \mathbf{u}\in\mathbb{R}^N:
        (\mathbf{m}^n)^\top\mathbf{u}=0
    \right\}.
\]
On this subspace, the discrete \(H^{-1}\) metric matrix is defined by
\begin{equation}
    \mathbf{A}_{H^{-1},h}^n
    :=
    \mathbf{M}_{\mathrm{L},h}^n
    (\mathbf{K}_h^n)^\dagger
    \mathbf{M}_{\mathrm{L},h}^n ,
    \label{eq:discrete_Hminus1_metric_matrix}
\end{equation}
where \((\mathbf{K}_h^n)^\dagger\) denotes the Moore--Penrose inverse of
\(\mathbf{K}_h^n\). Equivalently, this matrix represents the inverse
Laplace--Beltrami operator on the discrete zero-mean subspace. When the normal
metric is the \(H^{-1}\) metric, \(\mathbf{A}_{\nu,h}^n\) is understood as
\(\mathbf{A}_{H^{-1},h}^n\) restricted to \(\mathbb{V}_{h,0}^n\). The normal
right-hand sides are projected onto the dual of this subspace before solving
the response problems, and the computed normal response vectors are taken in
\(\mathbb{V}_{h,0}^n\).

For fourth-order geometric flows, the continuous biharmonic-type stabilization
introduced in Section~\ref{subsec:examples_metric_stabilization} is realized
algebraically. Since the spatial discretization uses piecewise linear finite
elements, we do not differentiate basis functions twice. Instead, we use the
squared discrete Laplace--Beltrami operator. In particular, the biharmonic-type
normal stabilization is represented by
\[
    \mathbf{D}_{\mathrm{bi},h}^n
    :=
    \beta_{\nu,4}
    \mathbf{K}_h^n
    (\mathbf{M}_{\mathrm{L},h}^n)^{-1}
    \mathbf{K}_h^n ,
    \qquad
    \beta_{\nu,4}\geq 0.
\]
For the curve diffusion flow, which uses the nonlocal \(H^{-1}\) normal
metric, we use the hybrid normal stabilization
\begin{equation}
    \mathbf{D}_{\nu,h}^n
    =
    \beta_{\nu,4}
    \mathbf{K}_h^n
    (\mathbf{M}_{\mathrm{L},h}^n)^{-1}
    \mathbf{K}_h^n
    +
    \beta_{\nu,2}
    \mathbf{K}_h^n,
    \qquad
    \beta_{\nu,4},\beta_{\nu,2}\geq 0 .
    \label{eq:hybrid_normal_stabilization}
\end{equation}
The first term is the biharmonic-type stabilization, and the second term is an
additional Laplacian stabilization. Both terms are symmetric positive
semi-definite. Indeed, for any \(\mathbf{u}\in\mathbb{R}^N\),
\[
    \mathbf{u}^{\top}
    \mathbf{K}_h^n
    (\mathbf{M}_{\mathrm{L},h}^n)^{-1}
    \mathbf{K}_h^n
    \mathbf{u}
    =
    (\mathbf{K}_h^n\mathbf{u})^{\top}
    (\mathbf{M}_{\mathrm{L},h}^n)^{-1}
    (\mathbf{K}_h^n\mathbf{u})
    \geq 0,
\]
and
\[
    \mathbf{u}^{\top}\mathbf{K}_h^n\mathbf{u}\geq 0.
\]
Thus, the hybrid choice satisfies the abstract assumption that
\(\mathbf{D}_{\nu,h}^n\) is symmetric positive semi-definite. Consequently, it
does not change the algebraic SAV cancellation or the block reduction. Its role
is purely numerical: the biharmonic component damps high-frequency modes
associated with fourth-order stiffness, while the Laplacian component damps
intermediate-frequency oscillations in the \(H^{-1}\) response.

The zero-order stabilized system matrices are
\[
    \mathbf{M}_{\nu,h}^n
    :=
    \mathbf{A}_{\nu,h}^n
    +
    \Delta t\,\mathbf{D}_{\nu,h}^n,
    \qquad
    \mathbf{M}_{\tau,h}^n
    :=
    \mathbf{A}_{\tau,h}^n
    +
    \Delta t\,\mathbf{D}_{\tau,h}^n .
\]
Under the assumptions on the metric and stabilization forms stated in
Section~\ref{subsec:abstract_components}, these matrices are
symmetric positive definite on the relevant discrete velocity spaces. When the
normal metric is the \(H^{-1}\) metric, this statement is understood after
restricting \(\mathbf{M}_{\nu,h}^n\) to the discrete zero-mean subspace
\(\mathbb{V}_{h,0}^n\).

We also define the load vectors associated with the geometric driving force,
the mesh driving force, and the constraint gradients:
\[
\begin{aligned}
    [\mathbf{F}_{\mathrm{g},h}^n]_k
    &:=
    (w_{\mathrm{g},h}^n,\chi_k^n)_h^n,
    &
    [\mathbf{F}_{\mathrm{m},h}^n]_k
    &:=
    (w_{\mathrm{m},h}^n,\chi_k^n)_h^n,
    \\
    [\mathbf{G}_{i,h}^n]_k
    &:=
    (g_{i,h}^n,\chi_k^n)_h^n,
    &
    i&=1,\ldots,K .
\end{aligned}
\]
Here the vectors
\(\mathbf{F}_{\mathrm{g},h}^n\),
\(\mathbf{F}_{\mathrm{m},h}^n\), and
\(\mathbf{G}_{i,h}^n\) are evaluated on the frozen polygonal curve
\(\bm{\gamma}_h^n\).

Let
\(\mathbf{V}_{\nu}^{n+1},\mathbf{V}_{\tau}^{n+1}\in\mathbb{R}^N\)
be the coefficient vectors of \(V_{\nu,h}^{n+1}\) and
\(V_{\tau,h}^{n+1}\) in the nodal basis. Then
\eqref{eq:fully_discrete_normal_variational} and
\eqref{eq:fully_discrete_tangent_variational} become
\begin{equation}
    \mathbf{M}_{\nu,h}^n
    \mathbf{V}_{\nu}^{n+1}
    =
    -
    \frac{r_{\mathrm{g},h}^{n+1}}{S_{\mathrm{g},h}^n}
    \mathbf{F}_{\mathrm{g},h}^n
    -
    \sum_{i=1}^K
    \lambda_{i,h}^{n+1}
    \mathbf{G}_{i,h}^n ,
    \label{eq:matrix_normal_velocity}
\end{equation}
and
\begin{equation}
    \mathbf{M}_{\tau,h}^n
    \mathbf{V}_{\tau}^{n+1}
    =
    -
    \frac{r_{\mathrm{m},h}^{n+1}}{S_{\mathrm{m},h}^n}
    \mathbf{F}_{\mathrm{m},h}^n .
    \label{eq:matrix_tangent_velocity}
\end{equation}
The normal velocity equation depends linearly on
\(r_{\mathrm{g},h}^{n+1}\) and on the Lagrange multipliers
$\lambda_{1,h}^{n+1},\ldots,\lambda_{K,h}^{n+1}$.
The tangential velocity equation depends linearly on
\(r_{\mathrm{m},h}^{n+1}\). These linear dependences will be used in the
algebraic block reduction below.

\subsection{Fully discrete algebraic system}
\label{subsec:fully_discrete_algebraic_system}

Using the matrices and vectors defined above, we now write the fully discrete
system at one time step. Given
\(\bm{\gamma}_h^n\), \(r_{\mathrm{g},h}^n\), and \(r_{\mathrm{m},h}^n\), we seek
\[
    \mathbf{V}_\nu^{n+1},\mathbf{V}_\tau^{n+1}\in\mathbb{R}^N,
    \qquad
    r_{\mathrm{g},h}^{n+1},r_{\mathrm{m},h}^{n+1}\in\mathbb{R},
    \qquad
    \lambda_{1,h}^{n+1},\ldots,\lambda_{K,h}^{n+1}\in\mathbb{R},
\]
such that
\begin{subequations}
\label{eq:fully_discrete_system}
\begin{align}
    \mathbf{M}_{\nu,h}^n\mathbf{V}_{\nu}^{n+1}
    &=
    -
    \frac{r_{\mathrm{g},h}^{n+1}}{S_{\mathrm{g},h}^n}
    \mathbf{F}_{\mathrm{g},h}^n
    -
    \sum_{i=1}^K
    \lambda_{i,h}^{n+1}
    \mathbf{G}_{i,h}^n,
    \label{eq:fully_discrete_system_normal}
    \\
    \mathbf{M}_{\tau,h}^n\mathbf{V}_{\tau}^{n+1}
    &=
    -
    \frac{r_{\mathrm{m},h}^{n+1}}{S_{\mathrm{m},h}^n}
    \mathbf{F}_{\mathrm{m},h}^n,
    \label{eq:fully_discrete_system_tangent}
    \\
    \frac{r_{\mathrm{g},h}^{n+1}-r_{\mathrm{g},h}^{n}}{\Delta t}
    &=
    \frac{1}{2S_{\mathrm{g},h}^n}
    (\mathbf{F}_{\mathrm{g},h}^n)^\top
    \mathbf{V}_{\nu}^{n+1}
    +
    \sum_{i=1}^K
    \frac{\lambda_{i,h}^{n+1}}{2r_{\mathrm{g},h}^{n+1}}
    (\mathbf{G}_{i,h}^n)^\top
    \mathbf{V}_{\nu}^{n+1},
    \label{eq:fully_discrete_system_rg}
    \\
    \frac{r_{\mathrm{m},h}^{n+1}-r_{\mathrm{m},h}^{n}}{\Delta t}
    &=
    \frac{1}{2S_{\mathrm{m},h}^n}
    (\mathbf{F}_{\mathrm{m},h}^n)^\top
    \mathbf{V}_{\tau}^{n+1}.
    \label{eq:fully_discrete_system_rm}
\end{align}
The updated nodal positions are defined by
\begin{equation}
    \mathbf{x}_j^{n+1}
    =
    \mathbf{x}_j^n
    +
    \Delta t
    \left(
        [\mathbf{V}_{\nu}^{n+1}]_j\bm{\nu}_j^n
        +
        [\mathbf{V}_{\tau}^{n+1}]_j\bm{\tau}_j^n
    \right),
    \qquad
    j=1,\ldots,N.
    \label{eq:fully_discrete_system_update}
\end{equation}
The nonlinear global constraints are imposed on the updated polygonal curve:
\begin{equation}
    C_i(\bm{\gamma}_h^{n+1})=0,
    \qquad
    i=1,\ldots,K.
    \label{eq:fully_discrete_system_constraints}
\end{equation}
\end{subequations}

Equations
\eqref{eq:fully_discrete_system}
constitute the fully discrete nonlinear algebraic system. The first two
equations are linear in the velocity vectors once the scalar variables are
fixed. The nonlinearity enters through the SAV update equation
\eqref{eq:fully_discrete_system_rg}, through the factor
$1/r_{\mathrm{g},h}^{n+1}$, and through the nonlinear constraints imposed on
the updated curve. This structure will be exploited in the block reduction
below.

\subsection{Fully discrete SAV dissipation}

We next prove that the fully discrete algebraic system preserves the SAV
dissipation structure. The proof is purely algebraic and relies on the
consistent use of the frozen matrices and frozen load vectors in the velocity
equations and in the SAV update equations.

\begin{theorem}[Fully discrete modified-energy dissipation]
\label{thm:fully_discrete_modified_energy_dissipation}
Under the assumptions on the metric and stabilization forms stated in
Sections~\ref{subsec:abstract_components} and
\ref{subsec:examples_metric_stabilization}, suppose that
$r_{\mathrm{g},h}^{n+1}\neq 0$ and that
$
    \mathbf{V}_{\nu}^{n+1}
$,
$
    \mathbf{V}_{\tau}^{n+1}
$,
$
    r_{\mathrm{g},h}^{n+1}
$,
$
    r_{\mathrm{m},h}^{n+1}
$,
$
    \lambda_{1,h}^{n+1},\ldots,\lambda_{K,h}^{n+1}
$
satisfy the fully discrete algebraic system
\eqref{eq:fully_discrete_system}.
Then the following estimates hold:
\begin{align}
    \frac{
        (r_{\mathrm{g},h}^{n+1})^2
        -
        (r_{\mathrm{g},h}^{n})^2
    }{\Delta t}
    &\leq
    -
    (\mathbf{V}_{\nu}^{n+1})^\top
    \mathbf{A}_{\nu,h}^n
    \mathbf{V}_{\nu}^{n+1}
    -
    \Delta t\,
    (\mathbf{V}_{\nu}^{n+1})^\top
    \mathbf{D}_{\nu,h}^n
    \mathbf{V}_{\nu}^{n+1}
    \leq 0,
    \label{eq:fully_discrete_geometric_sav_dissipation}
    \\
    \frac{
        (r_{\mathrm{m},h}^{n+1})^2
        -
        (r_{\mathrm{m},h}^{n})^2
    }{\Delta t}
    &\leq
    -
    (\mathbf{V}_{\tau}^{n+1})^\top
    \mathbf{A}_{\tau,h}^n
    \mathbf{V}_{\tau}^{n+1}
    -
    \Delta t\,
    (\mathbf{V}_{\tau}^{n+1})^\top
    \mathbf{D}_{\tau,h}^n
    \mathbf{V}_{\tau}^{n+1}
    \leq 0.
    \label{eq:fully_discrete_mesh_sav_dissipation}
\end{align}
Consequently, the fully discrete modified geometric and mesh SAV energies are
nonincreasing.
\end{theorem}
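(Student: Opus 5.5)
The plan is to mirror, at the matrix level, the algebraic argument of Theorem~\ref{thm:time_discrete_modified_energy_dissipation}, using the same elementary identity $2a(a-b)=a^2-b^2+(a-b)^2$. The nonlinear constraints \eqref{eq:fully_discrete_system_constraints} and the position update \eqref{eq:fully_discrete_system_update} will not be used. Only \eqref{eq:fully_discrete_system_normal}--\eqref{eq:fully_discrete_system_rm} enter, together with the symmetry and positivity of $\mathbf{A}_{\nu,h}^n,\mathbf{A}_{\tau,h}^n$ and the positive semi-definiteness of $\mathbf{D}_{\nu,h}^n,\mathbf{D}_{\tau,h}^n$.

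For the geometric part, I would first take the Euclidean inner product of \eqref{eq:fully_discrete_system_normal} with $\mathbf{V}_\nu^{n+1}$. This gives
\[
(\mathbf{V}_\nu^{n+1})^\top\mathbf{M}_{\nu,h}^n\mathbf{V}_\nu^{n+1}
=-\frac{r_{\mathrm{g},h}^{n+1}}{S_{\mathrm{g},h}^n}(\mathbf{F}_{\mathrm{g},h}^n)^\top\mathbf{V}_\nu^{n+1}
-\sum_{i=1}^K\lambda_{i,h}^{n+1}(\mathbf{G}_{i,h}^n)^\top\mathbf{V}_\nu^{n+1}.
\]
Here I use that $\mathbf{u}^\top\mathbf{F}=\mathbf{F}^\top\mathbf{u}$ for vectors, so the load-vector pairings appear in exactly the form used in \eqref{eq:fully_discrete_system_rg}. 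Next I would multiply \eqref{eq:fully_discrete_system_rg} by $2r_{\mathrm{g},h}^{n+1}$, which is legitimate since $r_{\mathrm{g},h}^{n+1}\neq0$. Applying the identity with $a=r_{\mathrm{g},h}^{n+1}$ and $b=r_{\mathrm{g},h}^n$ then yields $[(r^{n+1})^2-(r^n)^2+(r^{n+1}-r^n)^2]/\Delta t$ on the left. On the right, the factor $1/(2r_{\mathrm{g},h}^{n+1})$ in the multiplier term is cancelled, so the right-hand side equals exactly the negative of the right-hand side above. Adding the two relations and expanding $\mathbf{M}_{\nu,h}^n=\mathbf{A}_{\nu,h}^n+\Delta t\,\mathbf{D}_{\nu,h}^n$, I then drop the nonnegative term $(r^{n+1}-r^n)^2/\Delta t$. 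The resulting inequality, together with the nonnegativity of both quadratic forms, gives \eqref{eq:fully_discrete_geometric_sav_dissipation}.

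The tangential part is identical and simpler, with no multipliers. I would test \eqref{eq:fully_discrete_system_tangent} with $\mathbf{V}_\tau^{n+1}$, multiply \eqref{eq:fully_discrete_system_rm} by $2r_{\mathrm{m},h}^{n+1}$, add the two relations, and drop the square term. This requires no nonvanishing assumption on $r_{\mathrm{m},h}^{n+1}$.

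The only delicate point is the $H^{-1}$ case. There $\mathbf{A}_{\nu,h}^n$ is only positive semi-definite on $\mathbb{R}^N$, and the normal equation holds only on $\mathbb{V}_{h,0}^n$, after the right-hand side has been projected onto the dual of that subspace. I would handle this by observing that $\mathbf{V}_\nu^{n+1}\in\mathbb{V}_{h,0}^n$. Testing the projected equation with an admissible vector reproduces the unprojected pairings, because the projection removes only components annihilated by zero-mean vectors. Hence the same pairings $(\mathbf{F}_{\mathrm{g},h}^n)^\top\mathbf{V}_\nu^{n+1}$ and $(\mathbf{G}_{i,h}^n)^\top\mathbf{V}_\nu^{n+1}$ appear in both relations and the cancellation goes through. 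The required nonnegativity holds because $\mathbf{M}_{\mathrm{L},h}^n(\mathbf{K}_h^n)^\dagger\mathbf{M}_{\mathrm{L},h}^n$ is positive semi-definite, since $\mathbf{K}_h^\dagger$ is.
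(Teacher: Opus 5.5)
Your proposal is correct and follows the paper's proof essentially line by line. It tests the velocity equations with $\mathbf{V}_\nu^{n+1}$ and $\mathbf{V}_\tau^{n+1}$, multiplies the SAV updates by $2r_{\mathrm{g},h}^{n+1}$ and $2r_{\mathrm{m},h}^{n+1}$, applies $2a(a-b)=a^2-b^2+(a-b)^2$, adds so that the force and multiplier terms cancel, splits $\mathbf{M}=\mathbf{A}+\Delta t\,\mathbf{D}$, and drops the nonnegative square. Your extra paragraph on the $H^{-1}$ case is also correct: testing the projected equation with the zero-mean $\mathbf{V}_\nu^{n+1}$ recovers the unprojected pairings, and $\mathbf{M}_{\mathrm{L},h}^n(\mathbf{K}_h^n)^\dagger\mathbf{M}_{\mathrm{L},h}^n$ is positive semi-definite; this makes explicit a point the paper's proof leaves implicit.
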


\begin{proof}
We use the elementary identity
\[
    2a(a-b)=a^2-b^2+(a-b)^2 .
\]

We first prove the estimate for the geometric SAV variable. Taking the
Euclidean inner product of the normal velocity equation
\eqref{eq:fully_discrete_system_normal} with
$\mathbf{V}_{\nu}^{n+1}$ gives
\begin{align}
    (\mathbf{V}_{\nu}^{n+1})^\top
    \mathbf{M}_{\nu,h}^n
    \mathbf{V}_{\nu}^{n+1}
    &=
    -
    \frac{r_{\mathrm{g},h}^{n+1}}{S_{\mathrm{g},h}^n}
    (\mathbf{F}_{\mathrm{g},h}^n)^\top
    \mathbf{V}_{\nu}^{n+1}
    -
    \sum_{i=1}^K
    \lambda_{i,h}^{n+1}
    (\mathbf{G}_{i,h}^n)^\top
    \mathbf{V}_{\nu}^{n+1}.
    \label{eq:proof_fully_discrete_normal_test}
\end{align}
On the other hand, multiplying the geometric SAV equation
\eqref{eq:fully_discrete_system_rg} by
$2r_{\mathrm{g},h}^{n+1}$ and using the identity above yields
\begin{align}
    \frac{
        (r_{\mathrm{g},h}^{n+1})^2
        -
        (r_{\mathrm{g},h}^{n})^2
        +
        (r_{\mathrm{g},h}^{n+1}-r_{\mathrm{g},h}^{n})^2
    }{\Delta t}
    &=
    \frac{r_{\mathrm{g},h}^{n+1}}{S_{\mathrm{g},h}^n}
    (\mathbf{F}_{\mathrm{g},h}^n)^\top
    \mathbf{V}_{\nu}^{n+1}
    +
    \sum_{i=1}^K
    \lambda_{i,h}^{n+1}
    (\mathbf{G}_{i,h}^n)^\top
    \mathbf{V}_{\nu}^{n+1}.
    \label{eq:proof_fully_discrete_rg}
\end{align}
Adding \eqref{eq:proof_fully_discrete_normal_test} and
\eqref{eq:proof_fully_discrete_rg}, the scaled geometric force term and all
Lagrange multiplier terms cancel. Hence
\[
    \frac{
        (r_{\mathrm{g},h}^{n+1})^2
        -
        (r_{\mathrm{g},h}^{n})^2
    }{\Delta t}
    +
    \frac{
        (r_{\mathrm{g},h}^{n+1}-r_{\mathrm{g},h}^{n})^2
    }{\Delta t}
    +
    (\mathbf{V}_{\nu}^{n+1})^\top
    \mathbf{M}_{\nu,h}^n
    \mathbf{V}_{\nu}^{n+1}
    =
    0 .
\]
Using
\[
    \mathbf{M}_{\nu,h}^n
    =
    \mathbf{A}_{\nu,h}^n
    +
    \Delta t\,\mathbf{D}_{\nu,h}^n,
\]
we obtain
\begin{align*}
    \frac{
        (r_{\mathrm{g},h}^{n+1})^2
        -
        (r_{\mathrm{g},h}^{n})^2
    }{\Delta t}
    &+
    \frac{
        (r_{\mathrm{g},h}^{n+1}-r_{\mathrm{g},h}^{n})^2
    }{\Delta t}
    +
    (\mathbf{V}_{\nu}^{n+1})^\top
    \mathbf{A}_{\nu,h}^n
    \mathbf{V}_{\nu}^{n+1}
    +
    \Delta t\,
    (\mathbf{V}_{\nu}^{n+1})^\top
    \mathbf{D}_{\nu,h}^n
    \mathbf{V}_{\nu}^{n+1}
    =
    0 .
\end{align*}
By the positivity properties of the metric and stabilization matrices, the
second, third, and fourth terms are nonnegative. Therefore,
\eqref{eq:fully_discrete_geometric_sav_dissipation} follows.

We next prove the estimate for the mesh SAV variable. Taking the Euclidean
inner product of the tangential velocity equation
\eqref{eq:fully_discrete_system_tangent} with
$\mathbf{V}_{\tau}^{n+1}$ gives
\[
    (\mathbf{V}_{\tau}^{n+1})^\top
    \mathbf{M}_{\tau,h}^n
    \mathbf{V}_{\tau}^{n+1}
    =
    -
    \frac{r_{\mathrm{m},h}^{n+1}}{S_{\mathrm{m},h}^n}
    (\mathbf{F}_{\mathrm{m},h}^n)^\top
    \mathbf{V}_{\tau}^{n+1}.
\]
Multiplying the mesh SAV equation
\eqref{eq:fully_discrete_system_rm} by
$2r_{\mathrm{m},h}^{n+1}$ gives
\[
    \frac{
        (r_{\mathrm{m},h}^{n+1})^2
        -
        (r_{\mathrm{m},h}^{n})^2
        +
        (r_{\mathrm{m},h}^{n+1}-r_{\mathrm{m},h}^{n})^2
    }{\Delta t}
    =
    \frac{r_{\mathrm{m},h}^{n+1}}{S_{\mathrm{m},h}^n}
    (\mathbf{F}_{\mathrm{m},h}^n)^\top
    \mathbf{V}_{\tau}^{n+1}.
\]
Adding these two identities yields
\[
    \frac{
        (r_{\mathrm{m},h}^{n+1})^2
        -
        (r_{\mathrm{m},h}^{n})^2
    }{\Delta t}
    +
    \frac{
        (r_{\mathrm{m},h}^{n+1}-r_{\mathrm{m},h}^{n})^2
    }{\Delta t}
    +
    (\mathbf{V}_{\tau}^{n+1})^\top
    \mathbf{M}_{\tau,h}^n
    \mathbf{V}_{\tau}^{n+1}
    =
    0 .
\]
Since
\[
    \mathbf{M}_{\tau,h}^n
    =
    \mathbf{A}_{\tau,h}^n
    +
    \Delta t\,\mathbf{D}_{\tau,h}^n,
\]
we have
\begin{align*}
    \frac{
        (r_{\mathrm{m},h}^{n+1})^2
        -
        (r_{\mathrm{m},h}^{n})^2
    }{\Delta t}
    &+
    \frac{
        (r_{\mathrm{m},h}^{n+1}-r_{\mathrm{m},h}^{n})^2
    }{\Delta t}
    +
    (\mathbf{V}_{\tau}^{n+1})^\top
    \mathbf{A}_{\tau,h}^n
    \mathbf{V}_{\tau}^{n+1}
    +
    \Delta t\,
    (\mathbf{V}_{\tau}^{n+1})^\top
    \mathbf{D}_{\tau,h}^n
    \mathbf{V}_{\tau}^{n+1}
    =
    0 .
\end{align*}
Again, by the positivity properties of the metric and stabilization matrices,
the second, third, and fourth terms are nonnegative. Hence
\eqref{eq:fully_discrete_mesh_sav_dissipation} follows. The proof is complete.
\end{proof}

\begin{remark}[Role of nonlinear constraints]
The nonlinear constraints
\eqref{eq:fully_discrete_system_constraints}
are part of the fully discrete algebraic system. However, they are not used
directly in the proof of
Theorem~\ref{thm:fully_discrete_modified_energy_dissipation}. The dissipation
estimate follows from the cancellation between the velocity equations and the
SAV update equations. The constraints determine the Lagrange multipliers
through the reduced nonlinear system derived below.
\end{remark}

\subsection{Algebraic block reduction}
\label{subsec:block_reduction}

We now exploit the linear dependence of the velocity equations
\eqref{eq:fully_discrete_system_normal}--\eqref{eq:fully_discrete_system_tangent}
on the scalar unknowns. The block reduction is an algebraic reformulation of
the fully discrete system; it does not introduce an additional approximation.

We first define the response vectors associated with the frozen normal
equation. For the $L^2$ normal metric, these vectors are obtained by solving
\[
    \mathbf{M}_{\nu,h}^n\mathbf{V}_{\mathrm{g},h}^n
    =
    -\mathbf{F}_{\mathrm{g},h}^n,
    \qquad
    \mathbf{M}_{\nu,h}^n\mathbf{V}_{i,h}^n
    =
    -\mathbf{G}_{i,h}^n,
    \quad i=1,\ldots,K.
\]
For the $H^{-1}$ normal metric, the same notation is used, but the response
problems are understood on the discrete zero-mean subspace
$\mathbb{V}_{h,0}^n$. More precisely, let
\[
    \Pi_{0,h}^{n,*}\mathbf{F}
    :=
    \mathbf{F}
    -
    \frac{\mathbf{1}^{\top}\mathbf{F}}{\mathbf{1}^{\top}\mathbf{m}^n}
    \mathbf{m}^n
\]
denote the dual zero-mean projection, where $\mathbf{m}^n$ is the vector of diagonal
entries of the lumped mass matrix $\mathbf{M}_{\mathrm{L},h}^n$. Then, in the
$H^{-1}$ case, the normal response vectors are defined by
$
    \mathbf{V}_{\mathrm{g},h}^n\in\mathbb{V}_{h,0}^n
$,
$
    \mathbf{V}_{i,h}^n\in\mathbb{V}_{h,0}^n
$,
and
\[
    \mathbf{M}_{\nu,h}^n\mathbf{V}_{\mathrm{g},h}^n
    =
    -
    \Pi_{0,h}^{n,*}\mathbf{F}_{\mathrm{g},h}^n,
    \qquad
    \mathbf{M}_{\nu,h}^n\mathbf{V}_{i,h}^n
    =
    -
    \Pi_{0,h}^{n,*}\mathbf{G}_{i,h}^n,
    \quad i=1,\ldots,K,
\]
in the sense of equations posed on $\mathbb{V}_{h,0}^n$. Equivalently, these
systems may be solved with the side condition
$
    (\mathbf{m}^n)^\top\mathbf{V}=0
$.
With this convention, the notation below is common to both the $L^2$ and
$H^{-1}$ normal metrics.

The tangential response vector is defined by
\[
    \mathbf{M}_{\tau,h}^n\mathbf{V}_{\mathrm{m},h}^n
    =
    -\mathbf{F}_{\mathrm{m},h}^n .
\]

Once these response vectors are available, the normal and tangential velocity
equations imply the synthesis formulas
\begin{align}
    \mathbf{V}_{\nu}^{n+1}
    &=
    \frac{r_{\mathrm{g},h}^{n+1}}{S_{\mathrm{g},h}^n}
    \mathbf{V}_{\mathrm{g},h}^n
    +
    \sum_{i=1}^K
    \lambda_{i,h}^{n+1}
    \mathbf{V}_{i,h}^n,
    \label{eq:block_reduction_normal_synthesis}
    \\
    \mathbf{V}_{\tau}^{n+1}
    &=
    \frac{r_{\mathrm{m},h}^{n+1}}{S_{\mathrm{m},h}^n}
    \mathbf{V}_{\mathrm{m},h}^n .
    \label{eq:block_reduction_tangent_synthesis}
\end{align}
In particular, when the normal metric is the $H^{-1}$ metric, the synthesized
normal velocity belongs to $\mathbb{V}_{h,0}^n$.

The tangential subsystem is independent of the Lagrange multipliers. Substituting
\eqref{eq:block_reduction_tangent_synthesis} into
\eqref{eq:fully_discrete_system_rm} gives
\[
    \frac{r_{\mathrm{m},h}^{n+1}-r_{\mathrm{m},h}^{n}}{\Delta t}
    =
    \frac{r_{\mathrm{m},h}^{n+1}}{2(S_{\mathrm{m},h}^n)^2}
    (\mathbf{F}_{\mathrm{m},h}^n)^\top
    \mathbf{V}_{\mathrm{m},h}^n .
\]
Hence
\begin{equation}
    r_{\mathrm{m},h}^{n+1}
    =
    \frac{
        r_{\mathrm{m},h}^{n}
    }{
        1
        -
        \dfrac{\Delta t}{2(S_{\mathrm{m},h}^n)^2}
        (\mathbf{F}_{\mathrm{m},h}^n)^\top
        \mathbf{V}_{\mathrm{m},h}^n
    } .
    \label{eq:block_reduction_rm_update}
\end{equation}
Since
\[
    (\mathbf{F}_{\mathrm{m},h}^n)^\top
    \mathbf{V}_{\mathrm{m},h}^n
    =
    -
    (\mathbf{F}_{\mathrm{m},h}^n)^\top
    (\mathbf{M}_{\tau,h}^n)^{-1}
    \mathbf{F}_{\mathrm{m},h}^n
    \leq 0,
\]
the denominator in \eqref{eq:block_reduction_rm_update} is not smaller than
one. Thus, the mesh auxiliary variable is updated explicitly after the
tangential response solve. The tangential velocity is then reconstructed by
\[
    \mathbf{V}_{\tau}^{n+1}
    =
    \frac{r_{\mathrm{m},h}^{n+1}}{S_{\mathrm{m},h}^n}
    \mathbf{V}_{\mathrm{m},h}^n .
\]

It remains to determine
$
    r_{\mathrm{g},h}^{n+1}
$,
$
    \lambda_{1,h}^{n+1},\ldots,\lambda_{K,h}^{n+1}
$.
We collect these scalar unknowns into
\[
    \bm{\Xi}
    :=
    (r,\lambda_1,\ldots,\lambda_K)^\top
    \in\mathbb{R}^{K+1},
\]
where
\[
    r=r_{\mathrm{g},h}^{n+1},
    \qquad
    \lambda_i=\lambda_{i,h}^{n+1}.
\]
For a given $\bm{\Xi}$, define
\begin{equation}
    \mathbf{V}_{\nu}(\bm{\Xi})
    :=
    \frac{r}{S_{\mathrm{g},h}^n}
    \mathbf{V}_{\mathrm{g},h}^n
    +
    \sum_{i=1}^K
    \lambda_i\mathbf{V}_{i,h}^n .
    \label{eq:block_reduction_Vnu_Xi}
\end{equation}
The intermediate updated nodes are then given by
\begin{equation}
    \mathbf{x}_j(\bm{\Xi})
    =
    \mathbf{x}_j^n
    +
    \Delta t
    \left(
        [\mathbf{V}_{\nu}(\bm{\Xi})]_j\bm{\nu}_j^n
        +
        [\mathbf{V}_{\tau}^{n+1}]_j\bm{\tau}_j^n
    \right),
    \qquad
    j=1,\ldots,N.
    \label{eq:block_reduction_updated_nodes}
\end{equation}
The polygonal curve with vertices
$\{\mathbf{x}_j(\bm{\Xi})\}_{j=1}^N$ is denoted by
$\bm{\gamma}_h(\bm{\Xi})$.

The reduced nonlinear system consists of the geometric SAV equation and the
nonlinear constraints. Define
\begin{align}
    R_0(\bm{\Xi})
    &:=
    \frac{r-r_{\mathrm{g},h}^{n}}{\Delta t}
    -
    \frac{1}{2S_{\mathrm{g},h}^n}
    (\mathbf{F}_{\mathrm{g},h}^n)^\top
    \mathbf{V}_{\nu}(\bm{\Xi})
    -
    \sum_{i=1}^K
    \frac{\lambda_i}{2r}
    (\mathbf{G}_{i,h}^n)^\top
    \mathbf{V}_{\nu}(\bm{\Xi}),
    \label{eq:block_reduction_R0}
    \\
    R_i(\bm{\Xi})
    &:=
    C_i(\bm{\gamma}_h(\bm{\Xi})),
    \qquad i=1,\ldots,K.
    \label{eq:block_reduction_Ri}
\end{align}
Then the remaining scalar unknowns are determined by
\begin{equation}
    \bm{R}(\bm{\Xi})
    :=
    (R_0(\bm{\Xi}),R_1(\bm{\Xi}),\ldots,R_K(\bm{\Xi}))^\top
    =
    \bm{0}.
    \label{eq:block_reduction_reduced_system}
\end{equation}
This reduced nonlinear system has dimension $K+1$, independently of the number
of mesh vertices $N$.

\begin{remark}[Equivalence with the fully discrete system]
The block reduction is an algebraic reformulation of the fully discrete system,
not an additional approximation. Indeed, if $\bm\Xi$ solves
\eqref{eq:block_reduction_reduced_system}, and if
$r_{\mathrm{m},h}^{n+1}$, $\mathbf{V}_{\tau}^{n+1}$, and
$\mathbf{V}_{\nu}^{n+1}$ are reconstructed by
\eqref{eq:block_reduction_rm_update},
\eqref{eq:block_reduction_tangent_synthesis}, and
\eqref{eq:block_reduction_normal_synthesis}, then
\eqref{eq:fully_discrete_system_normal}--\eqref{eq:fully_discrete_system_constraints}
are satisfied. Conversely, any solution of the fully discrete system with
$r_{\mathrm{g},h}^{n+1}\neq 0$ satisfies the reduced system obtained above.
Thus, the reduced system is a solution strategy for the same fully discrete
equations.
\end{remark}

\begin{remark}[Compatibility of constraints with the $H^{-1}$ normal space]
For the $H^{-1}$ normal metric, the response vectors are computed in the
discrete zero-mean subspace. Consequently, only the components of the frozen
load and constraint-gradient vectors that act on this subspace contribute to
the normal response. In particular, constraints whose gradients lie entirely in
the excluded constant mode cannot be enforced by an ordinary Lagrange multiplier
within the strictly zero-mean $H^{-1}$ velocity space.

This is consistent with the continuous curve diffusion flow: area preservation
is encoded in the zero-mean character of the $H^{-1}$ normal velocity, rather
than imposed as an additional Lagrange multiplier constraint. Therefore, in the
curve diffusion experiment below, the area is monitored as a diagnostic
quantity unless an additional correction mechanism is explicitly introduced.
\end{remark}

\subsection{Newton solution of the reduced nonlinear system}
\label{subsec:newton_reduced_system}

We solve the reduced nonlinear system
\eqref{eq:block_reduction_reduced_system}
by Newton's method. Recall that the unknown vector is
\[
    \bm{\Xi}
    =
    (r,\lambda_1,\ldots,\lambda_K)^\top
    \in\mathbb{R}^{K+1},
\]
where
$
    r=r_{\mathrm{g},h}^{n+1}
$,
$
    \lambda_i=\lambda_{i,h}^{n+1}
$.
At the $m$-th Newton iteration, we solve
\begin{equation}
    \mathbf{J}(\bm{\Xi}^{(m)})\delta\bm{\Xi}^{(m)}
    =
    -\bm{R}(\bm{\Xi}^{(m)}),
    \qquad
    \bm{\Xi}^{(m+1)}
    =
    \bm{\Xi}^{(m)}+\delta\bm{\Xi}^{(m)}.
    \label{eq:newton_iteration_reduced_system}
\end{equation}
Here
\[
    \mathbf{J}(\bm{\Xi})
    :=
    \frac{\partial\bm{R}}{\partial\bm{\Xi}}(\bm{\Xi})
    \in\mathbb{R}^{(K+1)\times(K+1)}
\]
is the Jacobian matrix of the reduced residual.

The normal velocity synthesis
\eqref{eq:block_reduction_Vnu_Xi}
depends linearly on the scalar unknowns. Hence
\begin{equation}
    \frac{\partial\mathbf{V}_{\nu}}{\partial r}
    =
    \frac{1}{S_{\mathrm{g},h}^n}
    \mathbf{V}_{\mathrm{g},h}^n,
    \qquad
    \frac{\partial\mathbf{V}_{\nu}}{\partial\lambda_j}
    =
    \mathbf{V}_{j,h}^n,
    \quad j=1,\ldots,K.
    \label{eq:velocity_derivatives_scalar_unknowns}
\end{equation}
When the normal metric is the $H^{-1}$ metric, these response vectors have
already been computed in the discrete zero-mean subspace
$\mathbb{V}_{h,0}^n$. Therefore, the formulas below remain unchanged.

We first compute the derivatives of the SAV residual $R_0$. For compactness,
define
\[
    B_i(\bm{\Xi})
    :=
    (\mathbf{G}_{i,h}^n)^\top\mathbf{V}_{\nu}(\bm{\Xi}),
    \qquad
    i=1,\ldots,K.
\]
Differentiating \eqref{eq:block_reduction_R0} gives
\begin{align}
    \frac{\partial R_0}{\partial r}(\bm{\Xi})
    &=
    \frac{1}{\Delta t}
    -
    \frac{1}{2(S_{\mathrm{g},h}^n)^2}
    (\mathbf{F}_{\mathrm{g},h}^n)^\top
    \mathbf{V}_{\mathrm{g},h}^n
    +
    \sum_{i=1}^K
    \frac{\lambda_i}{2r^2}
    B_i(\bm{\Xi})
    -
    \sum_{i=1}^K
    \frac{\lambda_i}{2rS_{\mathrm{g},h}^n}
    (\mathbf{G}_{i,h}^n)^\top
    \mathbf{V}_{\mathrm{g},h}^n,
    \label{eq:jacobian_R0_r}
    \\
    \frac{\partial R_0}{\partial\lambda_j}(\bm{\Xi})
    &=
    -
    \frac{1}{2S_{\mathrm{g},h}^n}
    (\mathbf{F}_{\mathrm{g},h}^n)^\top
    \mathbf{V}_{j,h}^n
    -
    \frac{1}{2r}
    B_j(\bm{\Xi})
    -
    \sum_{i=1}^K
    \frac{\lambda_i}{2r}
    (\mathbf{G}_{i,h}^n)^\top
    \mathbf{V}_{j,h}^n,
    \qquad j=1,\ldots,K.
    \label{eq:jacobian_R0_lambda}
\end{align}
These expressions involve only the frozen load vectors and the response
vectors, except for the scalar quantities $B_i(\bm{\Xi})$.

We next compute the derivatives of the constraint residuals. Since
\[
    R_i(\bm{\Xi})
    =
    C_i(\bm{\gamma}_h(\bm{\Xi})),
    \qquad i=1,\ldots,K,
\]
the chain rule gives, for any scalar parameter
$
    \xi\in\{r,\lambda_1,\ldots,\lambda_K\}
$,
\[
    \frac{\partial R_i}{\partial\xi}(\bm{\Xi})
    =
    \sum_{j=1}^N
    \nabla_{\mathbf{x}_j}C_i(\bm{\gamma}_h(\bm{\Xi}))
    \cdot
    \frac{\partial\mathbf{x}_j(\bm{\Xi})}{\partial\xi}.
\]
By \eqref{eq:block_reduction_updated_nodes}, the tangential velocity
$\mathbf{V}_{\tau}^{n+1}$ has already been reconstructed and is independent of
$\bm{\Xi}$. Therefore,
\[
    \frac{\partial\mathbf{x}_j(\bm{\Xi})}{\partial\xi}
    =
    \Delta t\,
    \frac{\partial[\mathbf{V}_{\nu}(\bm{\Xi})]_j}{\partial\xi}
    \bm{\nu}_j^n .
\]
We define the projected constraint-gradient vector
$
    \widetilde{\mathbf{G}}_{i,h}(\bm{\Xi})\in\mathbb{R}^N
$
by
\begin{equation}
    [\widetilde{\mathbf{G}}_{i,h}(\bm{\Xi})]_j
    :=
    \nabla_{\mathbf{x}_j}C_i(\bm{\gamma}_h(\bm{\Xi}))
    \cdot
    \bm{\nu}_j^n,
    \qquad
    j=1,\ldots,N.
    \label{eq:projected_constraint_gradient}
\end{equation}
Then
\[
    \frac{\partial R_i}{\partial\xi}(\bm{\Xi})
    =
    \Delta t\,
    \widetilde{\mathbf{G}}_{i,h}(\bm{\Xi})^\top
    \frac{\partial\mathbf{V}_{\nu}}{\partial\xi}.
\]
Using \eqref{eq:velocity_derivatives_scalar_unknowns}, we obtain
\begin{align}
    \frac{\partial R_i}{\partial r}(\bm{\Xi})
    &=
    \frac{\Delta t}{S_{\mathrm{g},h}^n}
    \widetilde{\mathbf{G}}_{i,h}(\bm{\Xi})^\top
    \mathbf{V}_{\mathrm{g},h}^n,
    \label{eq:jacobian_constraint_r}
    \\
    \frac{\partial R_i}{\partial\lambda_j}(\bm{\Xi})
    &=
    \Delta t\,
    \widetilde{\mathbf{G}}_{i,h}(\bm{\Xi})^\top
    \mathbf{V}_{j,h}^n,
    \qquad i,j=1,\ldots,K.
    \label{eq:jacobian_constraint_lambda}
\end{align}

It is important to distinguish the frozen vectors
$\mathbf{G}_{i,h}^n$ from the projected gradients
$\widetilde{\mathbf{G}}_{i,h}(\bm{\Xi})$. The vectors
$\mathbf{G}_{i,h}^n$ are evaluated on the known curve $\bm{\gamma}_h^n$ and are
used in the normal velocity equation and in the geometric SAV equation. Their
role is to preserve the algebraic cancellation in the modified-energy
estimate. By contrast,
$\widetilde{\mathbf{G}}_{i,h}(\bm{\Xi})$ is evaluated on the intermediate updated
curve $\bm{\gamma}_h(\bm{\Xi})$ and appears only in the Jacobian of the
nonlinear constraint residuals.

In practice, all inner products involving only the frozen vectors
$\mathbf{F}_{\mathrm{g},h}^n$, $\mathbf{G}_{i,h}^n$,
$\mathbf{V}_{\mathrm{g},h}^n$, and $\mathbf{V}_{j,h}^n$ can be precomputed before the
Newton iteration. At each Newton step, one synthesizes
$\mathbf{V}_{\nu}(\bm{\Xi}^{(m)})$, constructs the intermediate curve
$\bm{\gamma}_h(\bm{\Xi}^{(m)})$, evaluates the constraint residuals
$R_i(\bm{\Xi}^{(m)})$, and assembles the projected gradients
$\widetilde{\mathbf{G}}_{i,h}(\bm{\Xi}^{(m)})$.

Unless otherwise stated, we use the initial guess
\[
    \bm{\Xi}^{(0)}
    =
    (r_{\mathrm{g},h}^{n},0,\ldots,0)^\top.
\]
When Lagrange multipliers from the previous time step are available, they may
be used as a warm start. The Newton iteration is stopped when
\[
    \|\bm{R}(\bm{\Xi}^{(m)})\|_2
    \leq
    \varepsilon_{\mathrm{Newton}}
\]
or when the relative increment satisfies
\[
    \frac{\|\delta\bm{\Xi}^{(m)}\|_2}
         {1+\|\bm{\Xi}^{(m)}\|_2}
    \leq
    \varepsilon_{\mathrm{Newton}}.
\]
Since the residual $R_0$ contains the factor $1/r$, the iteration is restricted
to iterates with $r\neq0$. In the computations below, the geometric auxiliary
variable is initialized from the positive quantity $S_{\mathrm{g},h}^0$, and
the Newton updates are monitored so that the iteration does not cross $r=0$.

\begin{remark}[Jacobian formulas for the $H^{-1}$ metric]
When the normal metric is the $H^{-1}$ metric, the response vectors
$\mathbf{V}_{\mathrm{g},h}^n$ and $\mathbf{V}_{i,h}^n$ are computed in
$\mathbb{V}_{h,0}^n$ with projected right-hand sides, as described in
Section~\ref{subsec:block_reduction}. After these response vectors have been
computed, the reduced residual and its Jacobian are assembled by the same
formulas as in the $L^2$ case. Thus, the low-dimensional Newton solve is
unchanged by the choice of the normal metric; the metric affects only the
precomputed response vectors.
\end{remark}

\begin{remark}[Local solvability of the reduced system]
The block reduction leaves a nonlinear system only for
\[
    \bm{\Xi}
    =
    (r,\lambda_1,\ldots,\lambda_K)^\top
    \in \mathbb{R}^{K+1}.
\]
The present paper does not attempt to prove a global solvability result for
this reduced system. Nevertheless, the formulation is locally well posed in the
standard algebraic sense. Namely, if the Jacobian of the reduced residual with
respect to \(\bm{\Xi}\) is nonsingular at a solution, then the implicit function
theorem gives local uniqueness and smooth dependence of the reduced unknowns
on the frozen geometric data and on the time step. In the numerical
experiments below, Newton's method is applied to this reduced system, and the
reported iteration counts provide a practical diagnostic of this local
solvability condition.
\end{remark}

\subsection{Algorithmic summary of one time step}
\label{subsec:algorithmic_summary}

We summarize one time step of the fully discrete dual-SAV PFEM in
Algorithm~\ref{alg:dual_sav_one_step}. The algorithm is written for the general
constrained case. For the $H^{-1}$ normal metric, the normal response solves are
understood on the discrete zero-mean subspace, as described above.

\begin{algorithm}[htbp]
\caption{One time step of the fully discrete dual-SAV PFEM}
\label{alg:dual_sav_one_step}
\begin{algorithmic}[1]
\Require
$\bm{\gamma}_h^n$, $r_{\mathrm{g},h}^n$, $r_{\mathrm{m},h}^n$,
and the time step size $\Delta t$
\Ensure
$\bm{\gamma}_h^{n+1}$, $r_{\mathrm{g},h}^{n+1}$,
$r_{\mathrm{m},h}^{n+1}$

\State
Compute frozen geometric quantities from $\bm{\gamma}_h^n$, including nodal
tangents and normals, metric matrices, stabilization matrices, and load vectors.

\State
Assemble the stabilized matrices
\[
    \mathbf{M}_{\nu,h}^n
    =
    \mathbf{A}_{\nu,h}^n+\Delta t\,\mathbf{D}_{\nu,h}^n,
    \qquad
    \mathbf{M}_{\tau,h}^n
    =
    \mathbf{A}_{\tau,h}^n+\Delta t\,\mathbf{D}_{\tau,h}^n .
\]
For fourth-order or nonlocal fourth-order flows, choose
$\mathbf{D}_{\nu,h}^n$ according to the appropriate symmetric positive
semi-definite stabilization, such as the hybrid stabilization
\eqref{eq:hybrid_normal_stabilization}.

\State
If the normal metric is the $H^{-1}$ metric, project the normal load vectors
onto the dual of the discrete zero-mean subspace.

\State
Solve the normal response problems
\[
    \mathbf{M}_{\nu,h}^n\mathbf{V}_{\mathrm{g},h}^n
    =
    -\mathbf{F}_{\mathrm{g},h}^n,
    \qquad
    \mathbf{M}_{\nu,h}^n\mathbf{V}_{i,h}^n
    =
    -\mathbf{G}_{i,h}^n,
    \quad i=1,\ldots,K,
\]
on the relevant normal velocity space. For the $H^{-1}$ metric, this means the
discrete zero-mean subspace.

\State
Solve the tangential response problem
\[
    \mathbf{M}_{\tau,h}^n\mathbf{V}_{\mathrm{m},h}^n
    =
    -\mathbf{F}_{\mathrm{m},h}^n .
\]

\State
Compute the mesh auxiliary variable by
\[
    r_{\mathrm{m},h}^{n+1}
    =
    \frac{
        r_{\mathrm{m},h}^{n}
    }{
        1
        -
        \dfrac{\Delta t}{2(S_{\mathrm{m},h}^n)^2}
        (\mathbf{F}_{\mathrm{m},h}^n)^\top
        \mathbf{V}_{\mathrm{m},h}^n
    } ,
\]
and reconstruct the tangential velocity
\[
    \mathbf{V}_{\tau}^{n+1}
    =
    \frac{r_{\mathrm{m},h}^{n+1}}{S_{\mathrm{m},h}^n}
    \mathbf{V}_{\mathrm{m},h}^n .
\]

\State
Solve the reduced nonlinear system
\[
    \bm{R}(\bm{\Xi})=\bm{0},
    \qquad
    \bm{\Xi}=(r,\lambda_1,\ldots,\lambda_K)^\top,
\]
by Newton's method, using the residuals
\eqref{eq:block_reduction_R0}--\eqref{eq:block_reduction_Ri}
and the Jacobian formulas
\eqref{eq:jacobian_R0_r}--\eqref{eq:jacobian_constraint_lambda}.

\State
Set
\[
    r_{\mathrm{g},h}^{n+1}=r,
    \qquad
    \lambda_{i,h}^{n+1}=\lambda_i,
    \quad i=1,\ldots,K.
\]

\State
Reconstruct the normal velocity
\[
    \mathbf{V}_{\nu}^{n+1}
    =
    \frac{r_{\mathrm{g},h}^{n+1}}{S_{\mathrm{g},h}^n}
    \mathbf{V}_{\mathrm{g},h}^n
    +
    \sum_{i=1}^K
    \lambda_{i,h}^{n+1}\mathbf{V}_{i,h}^n .
\]

\State
Update the nodes by
\[
    \mathbf{x}_j^{n+1}
    =
    \mathbf{x}_j^n
    +
    \Delta t
    \left(
        [\mathbf{V}_{\nu}^{n+1}]_j\bm{\nu}_j^n
        +
        [\mathbf{V}_{\tau}^{n+1}]_j\bm{\tau}_j^n
    \right),
    \qquad
    j=1,\ldots,N.
\]

\end{algorithmic}
\end{algorithm}
\FloatBarrier

Algorithm~\ref{alg:dual_sav_one_step} is an algebraic implementation of the
fully discrete system \eqref{eq:fully_discrete_system}. Therefore, once the
reduced nonlinear system is solved, the computed update satisfies the
modified-energy dissipation estimate in
Theorem~\ref{thm:fully_discrete_modified_energy_dissipation}.

\section{Applications and numerical validations}
\label{sec:numerical_validations}

We now test the fully discrete dual-SAV parametric finite element framework on
a selected set of planar geometric flows. The purpose of this section is not to
provide an exhaustive catalogue of geometric-flow simulations, but to verify the
main numerical features of the proposed method: temporal accuracy,
modified-energy dissipation, enforcement of nonlinear global constraints, mesh
quality improvement by tangential redistribution, and the practical behavior of
the algebraic block reduction.

The experiments are chosen so that each one tests a distinct component of the
framework. First, the curve shortening flow for a shrinking circle is used as a
baseline accuracy test, since an exact solution is available. Second, the
area-preserving curve shortening flow is used to examine the interaction
between tangential mesh redistribution and a single nonlinear global constraint.
Third, the curve diffusion flow is used to test the nonlocal $H^{-1}$ metric
within the same algebraic structure. Finally, a Helfrich-type flow with
simultaneous area and length constraints is used as a higher-order nonlinear
test with two global constraints.

Throughout this section, the stabilization terms are used in the zero-order
sense described in Section~\ref{sec:time_discretization}: they act on the new
velocity itself and do not propagate velocity history. Unless otherwise stated,
the artificial mesh regularization uses the uniform weight $w=1$. When a
comparison with purely Lagrangian motion is needed, we use the reference choice
$w=|\partial_\rho\bm{\gamma}|^{-1}$, for which no artificial tangential
redistribution is intended.

Unless explicitly stated otherwise, no post-step remeshing, no constraint
projection, and no SAV reinitialization are applied in the following
experiments.

\subsection{Experimental setup and diagnostic quantities}
\label{subsec:numerical_setup}

All computations are performed on periodic polygonal curves with $N$ vertices.
The nonlinear reduced system
\eqref{eq:block_reduction_reduced_system}
is solved by Newton's method with tolerance
$\varepsilon_{\mathrm{Newton}}$. The precise values of $N$, $\Delta t$,
the final time, and the stabilization parameters are specified in each
experiment.

We use the edge-ratio indicator
\[
    Q_{\mathrm{mesh}}^n
    :=
    \frac{\max_j l_j^n}{\min_j l_j^n},
    \qquad
    l_j^n:=|\mathbf{x}_{j+1}^n-\mathbf{x}_j^n|,
\]
to quantify the mesh quality. Values close to one indicate nearly uniform
edge lengths, while large values indicate vertex clustering or depletion.

For constrained computations, we report the relative area and length errors
\[
    e_A^n
    :=
    \frac{|A_h(\bm{\gamma}_h^n)-A_h(\bm{\gamma}_h^0)|}
         {|A_h(\bm{\gamma}_h^0)|},
    \qquad
    e_L^n
    :=
    \frac{|L_h(\bm{\gamma}_h^n)-L_h(\bm{\gamma}_h^0)|}
         {|L_h(\bm{\gamma}_h^0)|},
\]
whenever the corresponding constraints are imposed.

Since the theoretical estimates are stated for the modified SAV energies, we
distinguish these quantities from the original geometric energies. We denote
\[
    E_{\mathrm{SAV},g}^n:=(r_{\mathrm{g},h}^n)^2,
    \qquad
    E_{\mathrm{SAV},m}^n:=(r_{\mathrm{m},h}^n)^2.
\]
When the original geometric energy is also evaluated, we report the SAV gap
\[
    \operatorname{Gap}_{\mathrm{g}}^n
    :=
    \left|
        (r_{\mathrm{g},h}^n)^2
        -
        \left(
            E_{\mathrm{geom}}(\bm{\gamma}_h^n)+C_{\mathrm{g}}
        \right)
    \right|.
\]
The number of Newton iterations per time step is also recorded in the
constraint tests in order to assess the practical behavior of the reduced
algebraic system.

\subsection{Accuracy verification by the curve shortening flow}
\label{subsec:csf_accuracy}

\subsection{Accuracy verification by the curve shortening flow}

We first verify the temporal accuracy of the proposed scheme using the curve
shortening flow. This test is chosen because an exact solution is available for
a shrinking circle. We consider the length energy
\[
    E_{\mathrm{geom}}(\bm{\gamma})=L(\bm{\gamma}),
\]
whose scalar normal driving force is
\[
    w_{\mathrm{g}}=-\kappa.
\]
There are no global constraints in this test, so \(K=0\). The normal metric is
the \(L^2\) metric, and the mesh regularization uses the uniform weight \(w=1\).

As the initial curve, we take the unit circle centered at the origin. Under the
curve shortening flow, the exact solution remains a circle and its radius is
given by
\[
    R(t)=\sqrt{1-2t},
\]
up to the extinction time \(t=1/2\). We compute the solution up to
\(T=0.25\) using \(N=512\) vertices. The stabilization parameters are fixed as
\(\beta_\nu=10\), \(\beta_\tau=100\), and the SAV shifts are
\(C_{\mathrm{g}}=C_{\mathrm{m}}=1\). The time step sizes are
\[
    \Delta t
    =
    10^{-3},\ 5\times10^{-4},\ 2.5\times10^{-4},\ 1.25\times10^{-4}.
\]
The error is measured by the maximum error in the nodal radii:
\[
    e_{\Delta t}
    :=
    \max_{1\leq j\leq N}
    \left|
        |\mathbf{x}_j^M|-R(T)
    \right|,
    \qquad
    T=M\Delta t.
\]
The observed order of convergence is computed by
\[
    \mathrm{EOC}
    =
    \frac{
        \log(e_{\Delta t}/e_{\Delta t/2})
    }{
        \log 2
    }.
\]

Table~\ref{tab:csf_time_convergence} reports the temporal convergence results.
The observed convergence rate is approximately first order, which is consistent
with the semi-implicit backward-Euler structure of the scheme. The mesh-quality
indicator remains equal to one in this radially symmetric test, and the SAV gap
decreases proportionally to the time step.

\begin{table}[htbp]
\centering
\caption{Temporal convergence for the shrinking circle under the curve
shortening flow. The parameters are $N=512$, $T=0.25$,
$\beta_\nu=10$, $\beta_\tau=100$, and
$C_{\mathrm{g}}=C_{\mathrm{m}}=1$.}
\label{tab:csf_time_convergence}
\begin{tabular}{c c c c c}
\hline
$\Delta t$ & $e_{\Delta t}$ & EOC & $Q_{\mathrm{mesh}}$ & $\operatorname{Gap}_{\mathrm{g}}$ \\
\hline
$1.0\times 10^{-3}$  & $4.432782\times 10^{-4}$ & --     & $1.000000$ & $5.083406\times 10^{-4}$ \\
$5.0\times 10^{-4}$  & $2.219714\times 10^{-4}$ & $0.9978$ & $1.000000$ & $2.546265\times 10^{-4}$ \\
$2.5\times 10^{-4}$  & $1.110690\times 10^{-4}$ & $0.9989$ & $1.000000$ & $1.274277\times 10^{-4}$ \\
$1.25\times 10^{-4}$ & $5.555537\times 10^{-5}$ & $0.9995$ & $1.000000$ & $6.374248\times 10^{-5}$ \\
\hline
\end{tabular}
\end{table}

\subsection{Mesh redistribution and single-constraint enforcement}
\label{subsec:apcsf_mesh_constraint}

We next consider the area-preserving curve shortening flow. This example is
used to test the coupling between tangential mesh redistribution and a single
nonlinear global constraint. The geometric energy is the perimeter
\[
    E_{\mathrm{geom}}(\bm{\gamma})=L(\bm{\gamma}),
\]
and the enclosed area is prescribed to be conserved:
\[
    A_h(\bm{\gamma}_h^n)=A_h(\bm{\gamma}_h^0).
\]
Thus, this test corresponds to one global constraint, namely $K=1$.

The normal driving force is
\[
    w_{\mathrm{g}}=-\kappa ,
\]
and the normal metric is the $L^2$ metric. We compare two parametrization
strategies. The first is the Lagrangian reference case,
\[
    w=|\partial_\rho\bm{\gamma}|^{-1},
\]
for which no artificial tangential redistribution is intended. The second is
the uniform mesh redistribution used in the proposed method,
\[
    w=1.
\]
This comparison is designed to isolate the role of the tangential mesh
equation: both computations use the same normal geometric law and the same
area constraint, but only the second computation actively redistributes mesh
points.

The initial curve is the non-convex star-shaped curve
\[
    \bm{\gamma}(\theta,0)
    =
    r(\theta)
    \begin{pmatrix}
        \cos\theta\\
        \sin\theta
    \end{pmatrix},
    \qquad
    r(\theta)=1+0.9\cos(5\theta),
    \qquad
    0\leq\theta<2\pi .
\]
We use
\[
    N=256,\qquad
    T=0.50,\qquad
    \Delta t=5.0\times 10^{-4}.
\]
The stabilization parameters and SAV shifts are
\[
    \beta_\nu=10,\qquad
    \beta_\tau=100,\qquad
    C_{\mathrm{g}}=C_{\mathrm{m}}=1.
\]
The reduced Newton system is solved with tolerance
\[
    \varepsilon_{\mathrm{Newton}}=10^{-12},
\]
and the maximum number of Newton iterations is set to $20$.

Figure~\ref{fig:apcsf_mesh_comparison} shows the curve evolution for the two
parametrization strategies. In the Lagrangian reference computation, the
vertices become severely clustered during the evolution. Although the
algebraic system can be advanced to the final time and the area constraint is
still enforced, the resulting polygonal curve is no longer a reliable
approximation of the intended area-preserving geometric relaxation. We
therefore regard this run as a mesh-degeneration baseline rather than as a
successful computation. In contrast, the uniform tangential redistribution
prevents the severe vertex clustering and produces a smooth area-preserving
relaxation with a nearly uniform final vertex distribution.

\begin{figure}[htbp]
    \centering
    \includegraphics[width=0.95\textwidth]{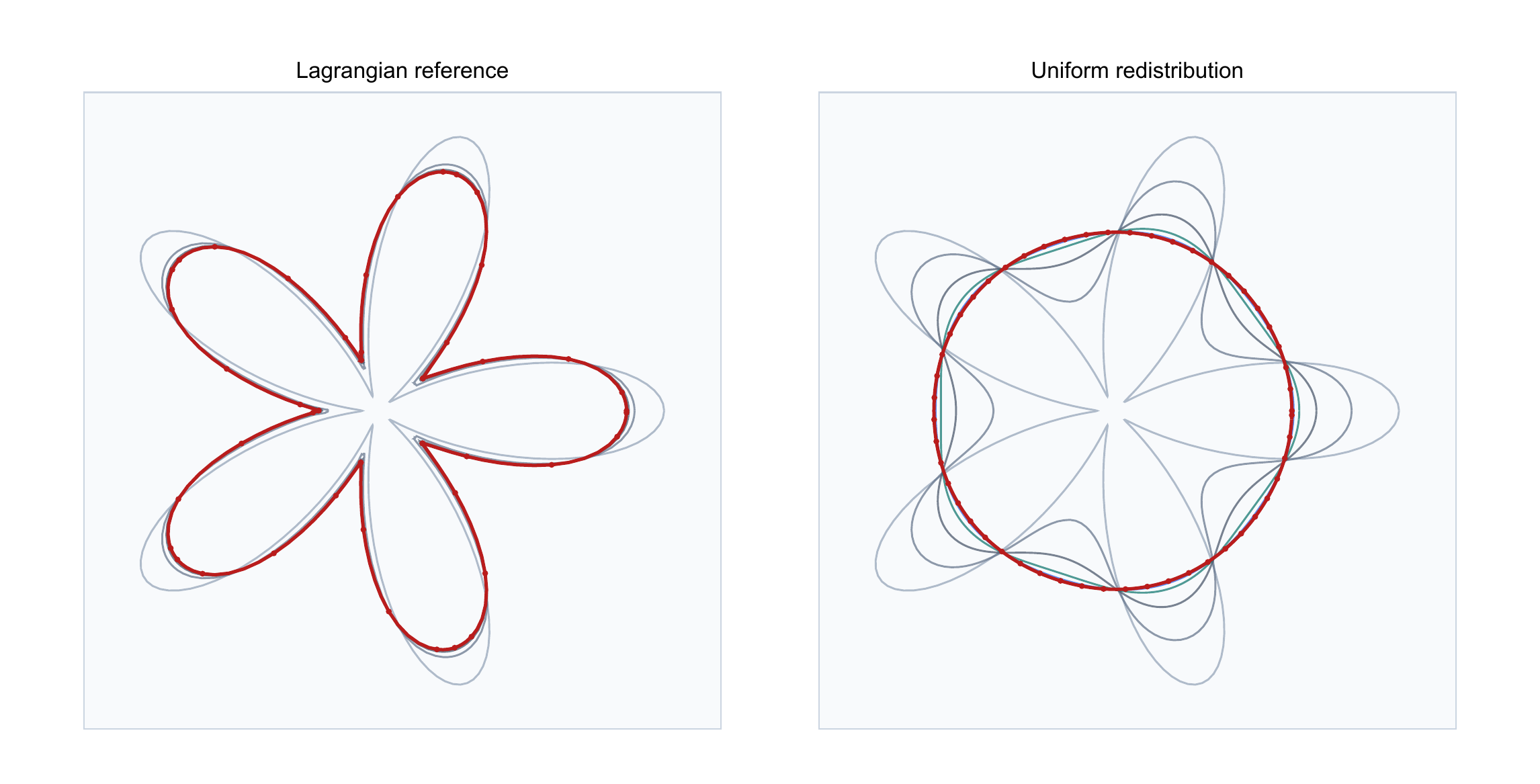}
    \caption{Area-preserving curve shortening flow from the star-shaped initial
    curve $r(\theta)=1+0.9\cos(5\theta)$. Left: Lagrangian reference
    parametrization, where severe vertex clustering makes the final polygonal
    curve unreliable as a geometric approximation. Right: uniform tangential
    mesh redistribution, which maintains a regular vertex distribution during
    the relaxation.}
    \label{fig:apcsf_mesh_comparison}
\end{figure}

Figure~\ref{fig:apcsf_mesh_ratio} shows the time history of the mesh-ratio
indicator $Q_{\mathrm{mesh}}^n$. The Lagrangian reference parametrization
develops rapidly growing mesh distortion. The uniform redistribution also
experiences an initial transient caused by the strongly non-convex initial
shape, but the tangential mesh equation subsequently regularizes the vertex
distribution and drives the mesh ratio back close to one.

\begin{figure}[htbp]
    \centering
    \includegraphics[width=0.65\textwidth]{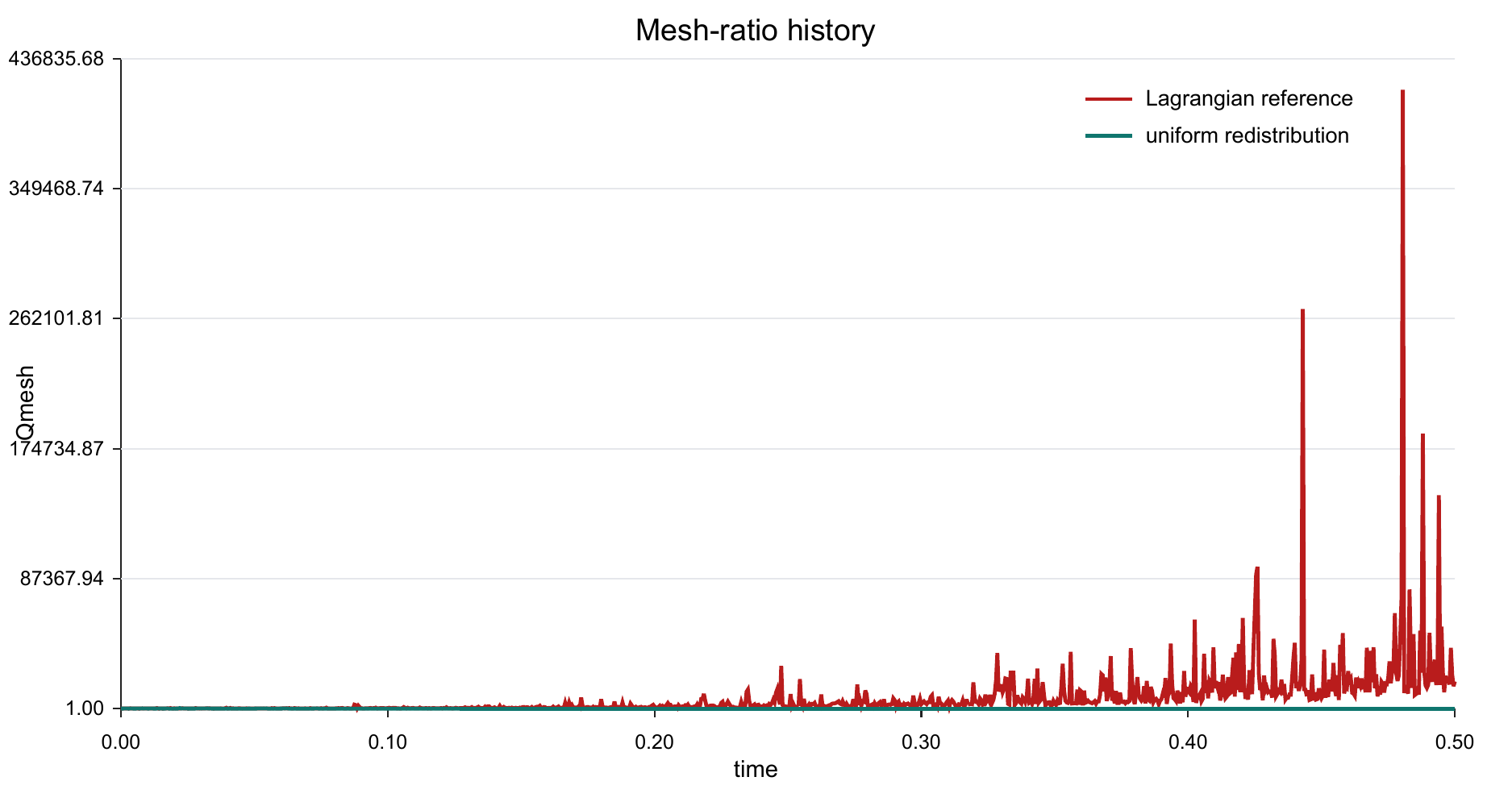}
    \caption{Time history of the mesh-ratio indicator
    $Q_{\mathrm{mesh}}^n$ for the area-preserving curve shortening flow.}
    \label{fig:apcsf_mesh_ratio}
\end{figure}

The same behavior is quantified in
Table~\ref{tab:apcsf_mesh_comparison}. The Lagrangian reference run preserves
the area constraint through the reduced Newton solve, but this does not prevent
mesh degeneration: the maximum mesh ratio reaches $4.16\times10^5$, and the
minimum edge length decreases to $2.79\times10^{-7}$. By contrast, the uniform
tangential redistribution keeps the final mesh nearly equidistributed, with
$Q_{\mathrm{mesh}}^M=1.0008$, while preserving the area constraint to
approximately $10^{-15}$ relative accuracy. The average number of Newton
iterations remains small in both cases, indicating that the reduced nonlinear
system is not the source of the Lagrangian failure; the failure is caused by
the degeneration of the parametrization.

\begin{table}[htbp]
\centering
\small
\caption{Mesh quality, area preservation, and Newton statistics for the
area-preserving curve shortening flow. Here
\(\max Q=\max_n Q_{\mathrm{mesh}}^n\),
\(Q^M=Q_{\mathrm{mesh}}^M\),
\(\min l=\min_{n,j}l_j^n\), and
\(\max e_A=\max_n e_A^n\). The parameters are
\(N=256\), \(T=0.50\), \(\Delta t=5.0\times10^{-4}\),
\(\beta_\nu=10\), \(\beta_\tau=100\), \(C_{\mathrm g}=C_{\mathrm m}=1\), and
\(\varepsilon_{\mathrm{Newton}}=10^{-12}\).}
\label{tab:apcsf_mesh_comparison}
\begin{tabular}{@{}lccccc@{}}
\hline
strategy
&
$\max Q$
&
$Q^M$
&
$\min l$
&
$\max e_A$
&
avg. iter. \\
\hline
Lagrangian
&
$4.16{\times}10^{5}$
&
$1.83{\times}10^{4}$
&
$2.79{\times}10^{-7}$
&
$2.26{\times}10^{-14}$
&
$3.43$
\\
uniform
&
$3.52{\times}10^{2}$
&
$1.0008$
&
$2.24{\times}10^{-4}$
&
$1.82{\times}10^{-15}$
&
$3.09$
\\
\hline
\end{tabular}
\end{table}

\subsection{Nonlocal metric test: curve diffusion flow}
\label{subsec:cdf_test}

We next consider the curve diffusion flow as a test of the nonlocal
$H^{-1}$ metric. The purpose of this experiment is to verify that the proposed
dual-SAV PFEM can incorporate an $H^{-1}$ normal response within the same
algebraic framework as the preceding $L^2$-metric examples.

The geometric energy is the perimeter
\[
    E_{\mathrm{geom}}(\bm{\gamma})=L(\bm{\gamma}),
\]
and the corresponding scalar geometric force is
\[
    w_{\mathrm{g}}=-\kappa .
\]
The normal metric is the $H^{-1}$ metric on the discrete zero-mean subspace.
Thus, no area constraint is imposed by a Lagrange multiplier in this experiment.
Instead, the enclosed area is monitored as a diagnostic quantity. This reflects
the continuous structure of the curve diffusion flow, where area preservation
is encoded in the zero-mean character of the $H^{-1}$ normal velocity.

For this nonlocal fourth-order flow, the normal metric matrix is realized by
the discrete inverse Laplace--Beltrami operator on the zero-mean subspace,
\[
    \mathbf{A}_{\nu,h}^{n}
    =
    \mathbf{M}_{L,h}^{n}
    (\mathbf{K}_h^n)^\dagger
    \mathbf{M}_{L,h}^{n}.
\]
In the implementation, this inverse is computed by a rank-one completion of
the stiffness matrix together with the lumped-mass zero-mean constraint. 
For compatible zero-mean right-hand sides, this rank-one completion gives the
same response as the Moore--Penrose inverse restricted to the discrete
zero-mean subspace.
The normal load vector is projected onto the dual of the lumped-mass zero-mean
subspace before solving the response problem.

We use the hybrid normal stabilization
\[
    \mathbf{D}_{\nu,h}^n
    =
    \beta_{\nu,4}
    \mathbf{K}_h^n
    (\mathbf{M}_{\mathrm{L},h}^n)^{-1}
    \mathbf{K}_h^n
    +
    \beta_{\nu,2}
    \mathbf{K}_h^n .
\]
Both terms are symmetric positive semi-definite. Hence this stabilization is
covered by the assumptions used in
Theorem~\ref{thm:fully_discrete_modified_energy_dissipation}. The biharmonic
component damps high-frequency stiffness, while the Laplacian component damps
intermediate-frequency oscillations in the $H^{-1}$ response.

The length-force vector is evaluated on the frozen polygonal curve and is used
consistently in both the normal velocity equation and the geometric SAV update
equation. No Helmholtz force filtering is used in this experiment. During the time integration, no post-step remeshing, no area
projection, and no SAV reinitialization are applied. The only redistribution
outside the time stepping is the initial arclength redistribution of the
vertices.

The initial curve is
\[
    \bm{\gamma}(\theta,0)
    =
    r(\theta)
    \begin{pmatrix}
        \cos\theta\\
        \sin\theta
    \end{pmatrix},
    \quad
    r(\theta)
    =
    1+0.2\cos(3\theta)+0.1\sin(5\theta),
    \qquad
    0\leq \theta<2\pi .
\]
Before time integration, the initial vertices are redistributed uniformly with
respect to arclength. We use
\[
    N=256,
    \qquad
    \Delta t=10^{-5},
    \qquad
    T=0.10 .
\]
The stabilization parameters are
\[
    \beta_{\nu,4}=10,
    \qquad
    \beta_{\nu,2}=10,
    \qquad
    \beta_{\tau}=10,
\]
and the SAV shifts are
\[
    C_{\mathrm{g}}=C_{\mathrm{m}}=1 .
\]

Figure~\ref{fig:cdf_evolution} shows the evolution of the curve. The curve
relaxes toward a more circular configuration while maintaining a regular
polygonal representation. The final vertices are displayed to indicate the
mesh distribution at the final time.

\begin{figure}[htbp]
    \centering
    \includegraphics[width=0.5\textwidth]{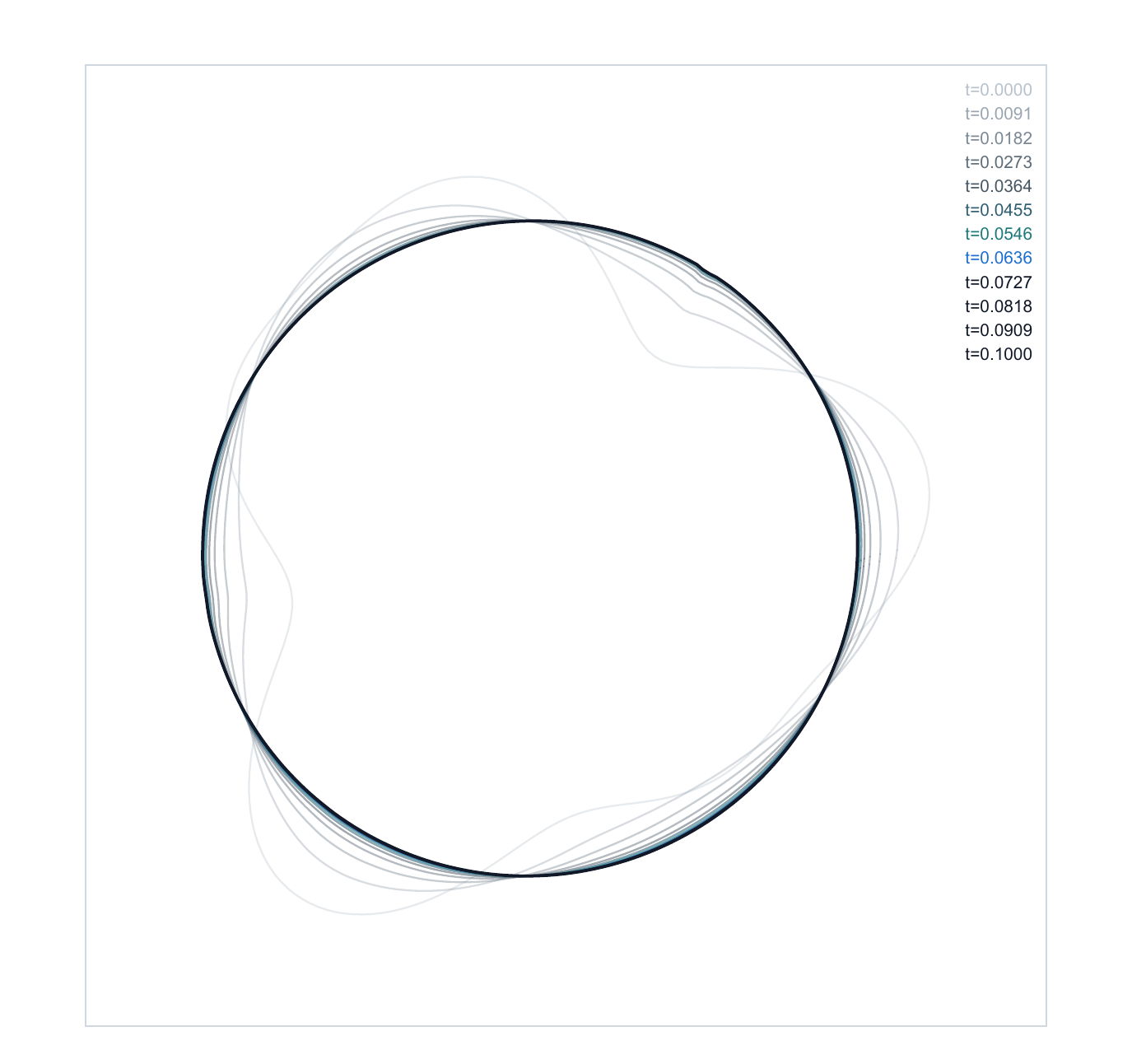}
    \caption{Curve diffusion flow computed with the $H^{-1}$ normal metric and
    hybrid normal stabilization. No post-step remeshing, area projection, SAV
    reinitialization, or Helmholtz force filtering is applied during the time
    integration. Dots indicate final-time vertices.}
    \label{fig:cdf_evolution}
\end{figure}

Table~\ref{tab:cdf_diagnostics} summarizes the quantitative diagnostics for
this computation. The area error remains small, with
\(\max_n e_A^n=7.910\times10^{-6}\), although no area constraint is imposed
through a Lagrange multiplier. This confirms that the zero-mean \(H^{-1}\) response keeps the area error small
in this test. The mesh
quality is also well controlled: the maximum mesh ratio is
\(\max_n Q_{\mathrm{mesh}}^n=1.2468\), and the final mesh ratio is
\(Q_{\mathrm{mesh}}^M=1.0254\). The final isoperimetric deficit is
\(\delta_{\mathrm{iso}}(T)=2.7644\times10^{-4}\), which is consistent with
relaxation toward a nearly circular steady state. The final geometric SAV gap
is \(\operatorname{Gap}_{\mathrm{g}}(T)=3.939\times10^{-4}\).

\begin{table}[htbp]
\centering
\small
\caption{Diagnostics for the curve diffusion flow. Here
\(\max e_A=\max_n e_A^n\),
\(\max Q=\max_n Q_{\mathrm{mesh}}^n\),
\(Q^M=Q_{\mathrm{mesh}}^M\),
\(\delta_{\mathrm{iso}}(T)=\delta_{\mathrm{iso}}^M\), and
\(\operatorname{Gap}_{\mathrm{g}}(T)=\operatorname{Gap}_{\mathrm{g}}^M\).}
\label{tab:cdf_diagnostics}
\begin{tabular}{@{}cccccccc@{}}
\hline
\(N\)
&
\(\Delta t\)
&
\(T\)
&
\(\max e_A\)
&
\(\max Q\)
&
\(Q^M\)
&
\(\delta_{\mathrm{iso}}(T)\)
&
\(\operatorname{Gap}_{\mathrm{g}}(T)\)
\\
\hline
\(256\)
&
\(1.0{\times}10^{-5}\)
&
\(0.10\)
&
\(7.910{\times}10^{-6}\)
&
\(1.2468\)
&
\(1.0254\)
&
\(2.7644{\times}10^{-4}\)
&
\(3.939{\times}10^{-4}\)
\\
\hline
\end{tabular}
\end{table}

\subsection{Helfrich-type flow with area and length constraints}
\label{subsec:helfrich_test}

We finally consider a Helfrich-type flow with simultaneous area and length
constraints. This experiment is designed as a nonlinear fourth-order test of
the proposed dual-SAV PFEM. In contrast to the preceding examples, the
geometric energy contains a nonlinear curvature term, and two global geometric
quantities are prescribed. Thus, this test corresponds to $K=2$, and the
reduced nonlinear system has dimension three, independently of the number of
mesh vertices.

We use the Helfrich-type bending energy
\[
    E_{\mathrm{geom}}(\bm{\gamma})
    =
    \frac12
    \int_{\bm{\gamma}}
    (\kappa-c_0)^2\,\mathrm{d}s,
\]
where $c_0$ is the spontaneous curvature parameter. With the sign convention used in this paper, the scalar normal force used in
the computation is the frozen polygonal counterpart of
\[
    w_{\mathrm{g}}
    =
    \partial_s^2\kappa
    +
    \frac12\kappa(\kappa^2-c_0^2).
\]
The linear term in \(\kappa\) in the energy does not contribute to the first
variation for closed planar curves with fixed turning number. In the implementation,
the corresponding normal force is evaluated on the frozen polygonal curve.

The enclosed area and the perimeter are constrained to their initial values:
\[
    A_h(\bm{\gamma}_h^n)=A_h(\bm{\gamma}_h^0),
    \qquad
    L_h(\bm{\gamma}_h^n)=L_h(\bm{\gamma}_h^0).
\]
The corresponding Lagrange multipliers are denoted by
$\lambda_{A,h}^{n+1}$ and $\lambda_{L,h}^{n+1}$. The normal metric is the
$L^2$ metric. Hence, unlike the curve diffusion test in
Section~\ref{subsec:cdf_test}, no zero-mean restriction is imposed on the
normal response space.

The normal stabilization is chosen as the biharmonic-type stabilization
\[
    \mathbf{D}_{\nu,h}^n
    =
    \beta_\nu
    \mathbf{K}_h^n
    (\mathbf{M}_{\mathrm{L},h}^n)^{-1}
    \mathbf{K}_h^n ,
\]
where $\mathbf{K}_h^n$ is the stiffness matrix and $\mathbf{M}_{\mathrm{L},h}^n$ is the
lumped mass matrix on the frozen polygonal curve. In this subsection, we write \(\beta_\nu\) for the biharmonic stabilization
parameter \(\beta_{\nu,4}\), since no additional Laplacian component is used in
the normal stabilization. This matrix is symmetric
positive semi-definite, and is therefore covered by the abstract stabilization
assumption used in
Theorem~\ref{thm:fully_discrete_modified_energy_dissipation}. The tangential
mesh equation uses the $L^2$ metric and the Laplacian stabilization
$\beta_\tau \mathbf{K}_h^n$.

At each time step, we first solve the three normal response problems associated
with the bending force and the two constraint gradients. The nonlinear part is
then reduced to the scalar unknowns
\[
    \left(
        r_{\mathrm{g},h}^{n+1},
        \lambda_{A,h}^{n+1},
        \lambda_{L,h}^{n+1}
    \right).
\]
Thus, the Newton iteration is performed on a three-dimensional system, while
the large spatial solves are only linear response problems.

The initial curve is the ellipse
\[
    \bm{\gamma}(\theta,0)
    =
    \begin{pmatrix}
        4\cos\theta\\
        \sin\theta
    \end{pmatrix},
    \qquad
    0\leq\theta<2\pi .
\]
We use
\[
    N=256,
    \qquad
    \Delta t=10^{-4},
    \qquad
    T=0.50 .
\]
The Helfrich and numerical parameters are
\[
    c_0=0.5,
    \qquad
    \beta_\nu=10,
    \qquad
    \beta_\tau=10,
    \qquad
    C_{\mathrm{g}}=C_{\mathrm{m}}=1 .
\]
The Newton tolerance is
\[
    \varepsilon_{\mathrm{Newton}}=10^{-10}.
\]
No post-step remeshing, no constraint projection, and no SAV reinitialization
are applied during the computation.

Figure~\ref{fig:helfrich_evolution} shows the evolution of the curve. The
initially elongated ellipse relaxes under the Helfrich-type bending energy,
while the enclosed area and perimeter are constrained to remain equal to their
initial values. The dots on the final curve indicate the final-time vertices.

\begin{figure}[htbp]
    \centering
    \includegraphics[width=0.5\textwidth]{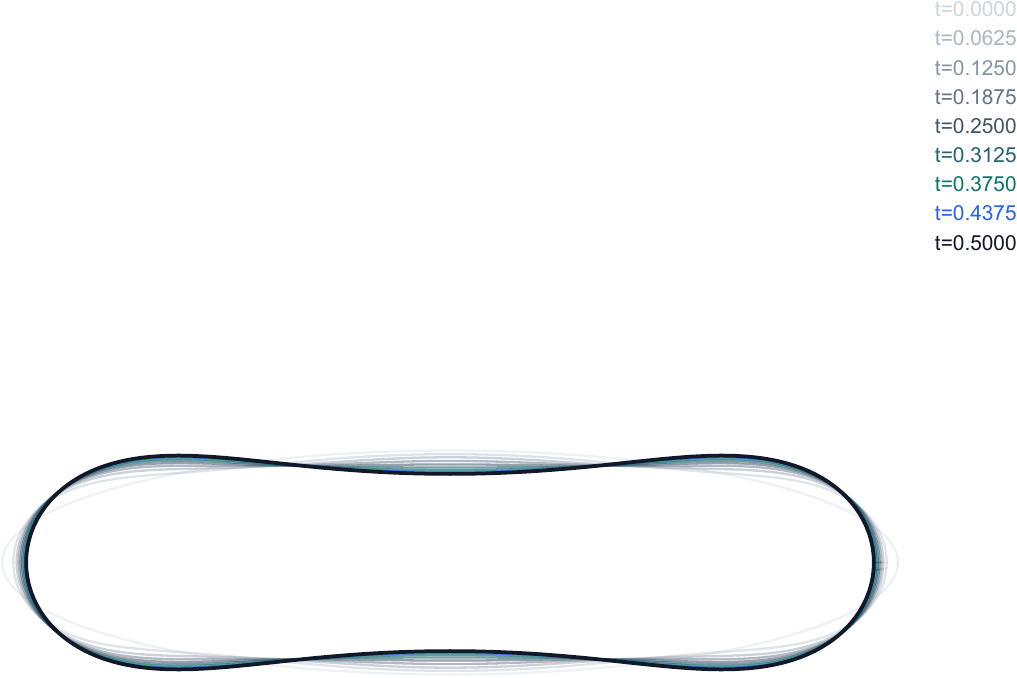}
    \caption{Helfrich-type flow with simultaneous area and length constraints.
    The evolution is driven by the bending energy
    $\frac12\int(\kappa-c_0)^2\,\mathrm{d}s$, while the enclosed area and perimeter are
    prescribed to remain equal to their initial values. Dots indicate
    final-time vertices.}
    \label{fig:helfrich_evolution}
\end{figure}

We monitor the relative area and length errors
\[
    e_A^n
    =
    \frac{
        |A_h(\bm{\gamma}_h^n)-A_h(\bm{\gamma}_h^0)|
    }{
        |A_h(\bm{\gamma}_h^0)|
    },
    \qquad
    e_L^n
    =
    \frac{
        |L_h(\bm{\gamma}_h^n)-L_h(\bm{\gamma}_h^0)|
    }{
        |L_h(\bm{\gamma}_h^0)|
    } .
\]
We also monitor the mesh-ratio indicator
\[
    Q_{\mathrm{mesh}}^n
    =
    \frac{\max_j l_j^n}{\min_j l_j^n},
\]
the original discrete bending energy
\[
    E_{\mathrm{bend},h}^n
    =
    \frac12
    \int_{\bm{\gamma}_h^n}
    (\kappa_h^n-c_0)^2\,\mathrm{d}s,
\]
and the modified geometric SAV energy
\[
    E_{\mathrm{SAV},g}^n
    =
    (r_{\mathrm{g},h}^n)^2 .
\]
Since the theoretical dissipation estimate concerns the modified SAV energy,
the original bending energy is reported as a physical diagnostic quantity. We
also report the geometric SAV gap
\[
    \operatorname{Gap}_{\mathrm{g}}^n
    =
    \left|
        (r_{\mathrm{g},h}^{n})^2
        -
        \left(
            E_{\mathrm{bend},h}^{n}+C_{\mathrm{g}}
        \right)
    \right|.
\]

Figure~\ref{fig:helfrich_diagnostics} shows the diagnostic histories. The
modified SAV energy decreases monotonically, in agreement with
Theorem~\ref{thm:fully_discrete_modified_energy_dissipation}. The original
bending energy also decreases in this computation, from
\[
    E_{\mathrm{bend},h}^0=4.6032
    \quad\text{to}\quad
    E_{\mathrm{bend},h}^M=2.0745 .
\]
The area and length errors remain at the level of round-off errors, indicating
that the reduced Newton system enforces both global constraints accurately.
The mesh-ratio indicator decreases substantially from the initially elongated
configuration and remains close to one at the final time.

\begin{figure}[htbp]
    \centering
    \includegraphics[width=0.78\textwidth]{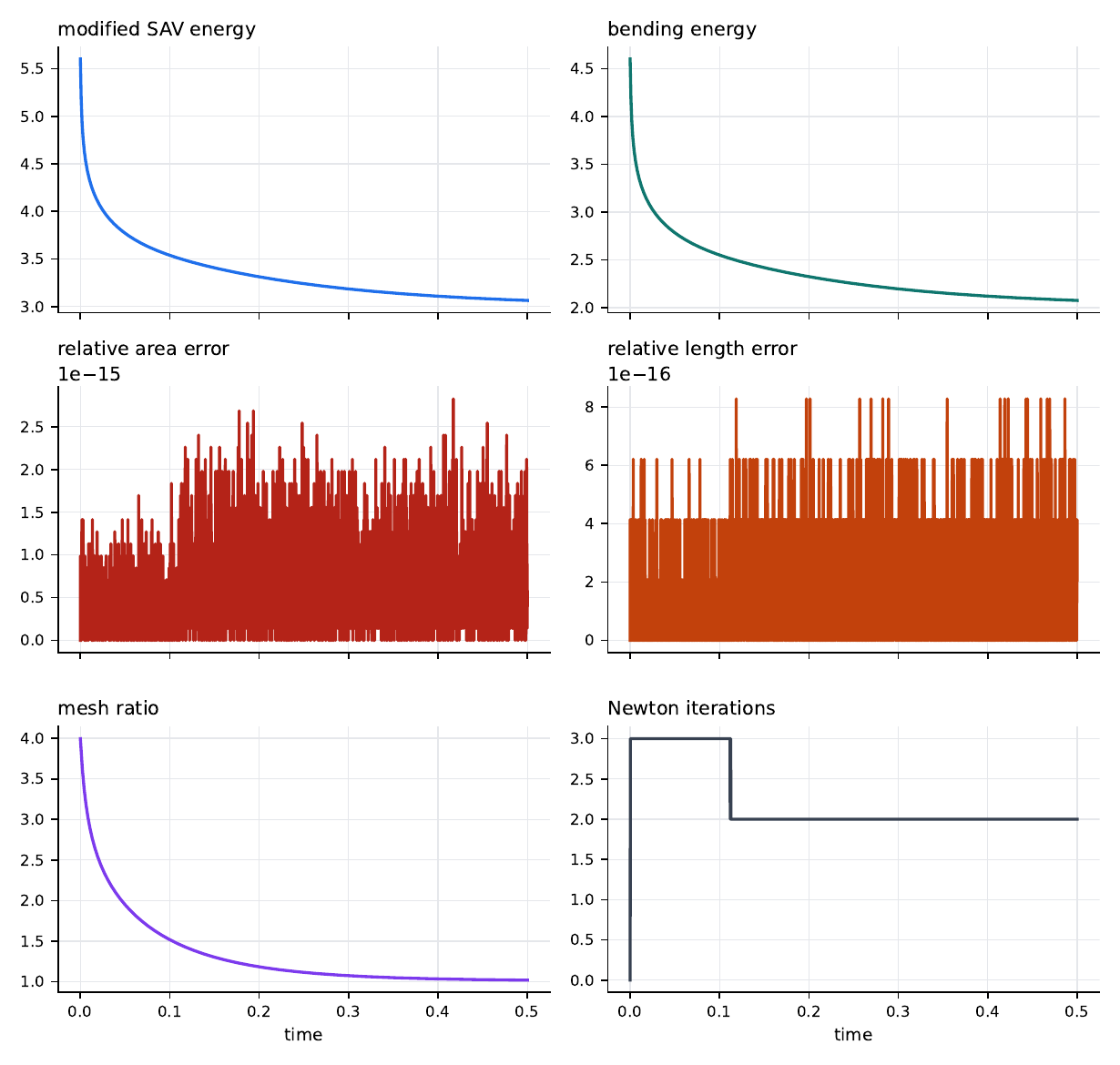}
    \caption{Diagnostic histories for the Helfrich-type flow. The plot shows
    the modified SAV energy, original bending energy, relative area error,
    relative length error, mesh-ratio indicator, and Newton iteration count.}
    \label{fig:helfrich_diagnostics}
\end{figure}

Table~\ref{tab:helfrich_diagnostics} summarizes the quantitative diagnostics.
The area and length constraints are preserved up to errors close to round-off level,
with
\[
    \max_n e_A^n=2.827\times10^{-15},
    \qquad
    \max_n e_L^n=8.283\times10^{-16}.
\]
The maximum mesh ratio is
\[
    \max_n Q_{\mathrm{mesh}}^n=3.9952,
\]
which reflects the initially elongated ellipse. During the computation, the
tangential mesh redistribution improves the vertex distribution, and the final
mesh ratio decreases to
\[
    Q_{\mathrm{mesh}}^M=1.0197 .
\]
The average number of Newton iterations is \(2.22\), confirming that the three-dimensional reduced nonlinear solve adds only a modest nonlinear
algebraic cost. The final
geometric SAV gap is
\[
    \operatorname{Gap}_{\mathrm{g}}(T)=1.022\times10^{-2}.
\]

\begin{table}[htbp]
\centering
\small
\caption{Diagnostics for the Helfrich-type flow with simultaneous area and
length constraints. Here
\(\max e_A=\max_n e_A^n\),
\(\max e_L=\max_n e_L^n\),
\(\max Q=\max_n Q_{\mathrm{mesh}}^n\),
\(Q^M=Q_{\mathrm{mesh}}^M\), and
\(\operatorname{Gap}_{\mathrm{g}}(T)=\operatorname{Gap}_{\mathrm{g}}^M\).}
\label{tab:helfrich_diagnostics}
\begin{tabular}{@{}cccccccc@{}}
\hline
\(N\)
&
\(\Delta t\)
&
\(\max e_A\)
&
\(\max e_L\)
&
\(\max Q\)
&
\(Q^M\)
&
avg. iter.
&
\(\operatorname{Gap}_{\mathrm{g}}(T)\)
\\
\hline
\(256\)
&
\(1.0{\times}10^{-4}\)
&
\(2.827{\times}10^{-15}\)
&
\(8.283{\times}10^{-16}\)
&
\(3.9952\)
&
\(1.0197\)
&
\(2.22\)
&
\(1.022{\times}10^{-2}\)
\\
\hline
\end{tabular}
\end{table}

To further illustrate the computational implication of the block reduction, we
repeat the Helfrich-type experiment with different numbers of vertices, using
the same parameter values as above except for \(N\). For
this problem, the number of global constraints is \(K=2\). Hence, after the
linear response problems have been solved, the remaining nonlinear system is
always three-dimensional, independently of \(N\). Table~\ref{tab:helfrich_cost}
reports the number of response solves per time step, the average number of
Newton iterations, the average CPU time per time step, and the maximum
constraint errors.

For each time step, four response problems are solved: one for the bending
force, two for the area and length constraint gradients, and one for the
tangential mesh equation. The average number of Newton iterations remains
essentially unchanged as \(N\) increases. Thus, the increase in the CPU time is
mainly due to the larger linear response problems, while the nonlinear
algebraic part remains low-dimensional.

\begin{table}[htbp]
\centering
\footnotesize
\setlength{\tabcolsep}{4pt}
\caption{Computational cost for the Helfrich-type flow with simultaneous area
and length constraints. The reduced nonlinear system has dimension three for
all values of \(N\). Here ``resp.'' denotes the number of linear response
solves per time step, and CPU times are reported in seconds per time step
after excluding Julia compilation warm-up. The time step size is
\(\Delta t=1.0\times10^{-4}\) in all cases.}
\label{tab:helfrich_cost}
\begin{tabular}{@{}cccccc@{}}
\hline
\(N\)
&
resp.
&
avg. iter.
&
CPU/step (s)
&
\(\max e_A\)
&
\(\max e_L\)
\\
\hline
\(128\)
&
\(4\)
&
\(2.23\)
&
\(4.264{\times}10^{-4}\)
&
\(2.121{\times}10^{-15}\)
&
\(8.284{\times}10^{-16}\)
\\
\(256\)
&
\(4\)
&
\(2.22\)
&
\(1.645{\times}10^{-3}\)
&
\(2.827{\times}10^{-15}\)
&
\(8.283{\times}10^{-16}\)
\\
\(512\)
&
\(4\)
&
\(2.22\)
&
\(8.915{\times}10^{-3}\)
&
\(4.382{\times}10^{-15}\)
&
\(1.035{\times}10^{-15}\)
\\
\hline
\end{tabular}
\end{table}

The constraint errors remain close to round-off level for all tested mesh
sizes. In particular, the maximum area and length errors remain below
\(5\times10^{-15}\) and \(2\times10^{-15}\), respectively. The average Newton
iteration count stays near \(2.22\) for all values of \(N\). This indicates that
the reduced nonlinear solve does not become more difficult as the spatial
resolution is increased in this test.

Finally, we examine the relation between the modified geometric SAV energy and
the original bending energy for the Helfrich-type flow. The dissipation
estimate in Theorem~\ref{thm:fully_discrete_modified_energy_dissipation}
concerns the modified quantity
\[
    (r_{\mathrm{g},h}^n)^2,
\]
whereas the original bending energy is
\[
    E_{\mathrm{bend},h}^n
    =
    \frac12
    \int_{\bm{\gamma}_h^n}
    (\kappa_h^n-c_0)^2\,\mathrm{d}s .
\]
We therefore measure the geometric SAV gap
\[
    \operatorname{Gap}_{\mathrm{g}}^n
    =
    \left|
        (r_{\mathrm{g},h}^{n})^2
        -
        \left(
            E_{\mathrm{bend},h}^{n}
            +
            C_{\mathrm g}
        \right)
    \right|.
\]
For this test, we use the same Helfrich-type setting as above and vary only
the time step size. We also report the relative SAV gap
\[
    \operatorname{RelGap}_{\mathrm{g}}(T)
    =
    \frac{
        \operatorname{Gap}_{\mathrm{g}}(T)
    }{
        E_{\mathrm{bend},h}^M+C_{\mathrm g}
    },
\]
where \(M\) denotes the final time step.

\begin{table}[!htbp]
\centering
\small
\caption{Time-step dependence of the geometric SAV gap for the Helfrich-type
flow. Here
\(\operatorname{Gap}_{\mathrm{g}}(T)=\operatorname{Gap}_{\mathrm{g}}^M\).
The parameters are the same as in the main Helfrich-type experiment, except
for \(\Delta t\).}
\label{tab:helfrich_sav_gap_dt}
\begin{tabular}{@{}cccccc@{}}
\hline
\(\Delta t\)
&
\(\operatorname{Gap}_{\mathrm{g}}(T)\)
&
\(\operatorname{RelGap}_{\mathrm{g}}(T)\)
&
\(\max e_A\)
&
\(\max e_L\)
&
avg. iter.
\\
\hline
\(2.0{\times}10^{-4}\)
&
\(1.406{\times}10^{-2}\)
&
\(4.571{\times}10^{-3}\)
&
\(2.545{\times}10^{-15}\)
&
\(1.035{\times}10^{-15}\)
&
\(2.46\)
\\
\(1.0{\times}10^{-4}\)
&
\(1.022{\times}10^{-2}\)
&
\(3.324{\times}10^{-3}\)
&
\(2.827{\times}10^{-15}\)
&
\(8.283{\times}10^{-16}\)
&
\(2.22\)
\\
\(5.0{\times}10^{-5}\)
&
\(7.145{\times}10^{-3}\)
&
\(2.324{\times}10^{-3}\)
&
\(3.110{\times}10^{-15}\)
&
\(1.035{\times}10^{-15}\)
&
\(2.11\)
\\
\hline
\end{tabular}
\end{table}

Table~\ref{tab:helfrich_sav_gap_dt} shows that the geometric SAV gap decreases
as the time step is refined. The relative gap exhibits the same decreasing
trend. At the same time, the area and length constraints remain enforced up to
errors close to round-off level for all tested time step sizes.

This behavior is consistent with the interpretation that the SAV variable
tracks the shifted original bending energy more closely for smaller time step
sizes. Thus, although the theoretical dissipation estimate is stated for the
modified SAV energy, the numerical results indicate that the modified
geometric SAV energy remains close to the shifted original bending energy in
this Helfrich-type test. The average Newton iteration count also remains small,
indicating that time-step refinement does not make the reduced
three-dimensional nonlinear solve more difficult in this experiment.

\section{Conclusion}
\label{sec:conclusion}

We have developed a stabilized dual-SAV parametric finite element framework for
constrained planar geometric flows. The key idea is to separate the physical
geometric energy from the artificial mesh regularization energy by introducing
two scalar auxiliary variables. This separation allows the physical normal
evolution and the tangential mesh redistribution to be treated within a common
modified-energy framework, while preventing the artificial mesh mechanism from
introducing an additional normal driving force.

The semi-implicit frozen-metric discretization leads to linear spatial response
problems and satisfies modified-energy dissipation estimates for both the
geometric and mesh SAV variables. Nonlinear global constraints are handled by
an algebraic block reduction: after the response problems are solved, the
remaining nonlinear system involves only the geometric SAV variable and the
Lagrange multipliers. For \(K\) constraints, this system has dimension
\(K+1\), independently of the number of vertices.

The numerical experiments demonstrate the main features of the proposed
framework. The curve shortening test confirms first-order temporal accuracy,
the area-preserving curve shortening test illustrates the effectiveness of the
tangential mesh redistribution and the single-constraint block reduction, the
curve diffusion test shows that the nonlocal \(H^{-1}\) metric can be handled
within the same algebraic structure, and the Helfrich-type test verifies the
practical behavior of the three-dimensional reduced Newton system for
simultaneous area and length constraints.

Several issues remain for future work. First, the present analysis is restricted
to closed planar curves. Extending the full framework to evolving surfaces in
\(\mathbb{R}^3\) requires additional surface-mesh quality control and a separate
robustness analysis. Second, the relation between the modified SAV energies and
the original geometric energies deserves further investigation, especially for
larger time steps and higher-order flows. Third, while the present experiments show the usefulness of the algebraic block
reduction, a more systematic comparison with fully implicit monolithic and
discrete-gradient-type structure-preserving schemes remains an important topic
for future study.
A rigorous error analysis and a systematic parameter selection strategy for
the stabilization coefficients are also left for future work.

\bibliographystyle{siam}
\bibliography{2026_S}

\begin{thebibliography}{10}

\bibitem{bai2024convergent}
{\sc G.~Bai, J.~Hu, and B.~Li}, {\em A convergent evolving finite element method with artificial tangential motion for surface evolution under a prescribed velocity field}, SIAM J. Numer. Anal., 62 (2024), pp.~2172--2195.

\bibitem{bao2025energy}
{\sc W.~Bao, Y.~Li, and D.~Wang}, {\em An energy-stable parametric finite element method for the willmore flow in three dimensions}, 2025.

\bibitem{barrett2007parametric}
{\sc J.~W. Barrett, H.~Garcke, and R.~Nürnberg}, {\em A parametric finite element method for fourth order geometric evolution equations}, Journal of Computational Physics, 222 (2007), pp.~441--467.

\bibitem{barrett2008parametric}
\leavevmode\vrule height 2pt depth -1.6pt width 23pt, {\em On the parametric finite element approximation of evolving hypersurfaces in r3}, Journal of Computational Physics, 227 (2008), pp.~4281--4307.

\bibitem{barrett2020parametric}
\leavevmode\vrule height 2pt depth -1.6pt width 23pt, {\em Chapter 4 - parametric finite element approximations of curvature-driven interface evolutions}, in Geometric Partial Differential Equations - Part I, A.~Bonito and R.~H. Nochetto, eds., vol.~21 of Handbook of Numerical Analysis, Elsevier, 2020, pp.~275--423.

\bibitem{cheng2018multiple}
{\sc Q.~Cheng and J.~Shen}, {\em Multiple scalar auxiliary variable ({MSAV}) approach and its application to the phase-field vesicle membrane model}, SIAM J. Sci. Comput., 40 (2018), pp.~A3982--A4006.

\bibitem{deckelnick2005computation}
{\sc K.~Deckelnick, G.~Dziuk, and C.~M. Elliott}, {\em Computation of geometric partial differential equations and mean curvature flow}, Acta Numer., 14 (2005), pp.~139--232.

\bibitem{duan2026adaptive}
{\sc Z.~Duan and M.~Li}, {\em Adaptive finite difference methods for the willmore flow: mesh redistribution algorithm and tangential velocity approach}, 2026.

\bibitem{dziuk1991algorithm}
{\sc G.~Dziuk}, {\em An algorithm for evolutionary surfaces}, Numer. Math., 58 (1991), pp.~603--611.

\bibitem{elliott2010modeling}
{\sc C.~M. Elliott and B.~Stinner}, {\em Modeling and computation of two phase geometric biomembranes using surface finite elements}, J. Comput. Phys., 229 (2010), pp.~6585--6612.

\bibitem{epstein1987curve}
{\sc C.~L. Epstein and M.~Gage}, {\em The curve shortening flow}, in Wave Motion: Theory, Modelling, and Computation: Proceedings of a Conference in Honor of the 60th Birthday of Peter D. Lax, A.~J. Chorin and A.~J. Majda, eds., Springer US, New York, NY, 1987, pp.~15--59.

\bibitem{eyre1998unconditionally}
{\sc D.~J. Eyre}, {\em Unconditionally gradient stable time marching the {C}ahn-{H}illiard equation}, in Computational and mathematical models of microstructural evolution ({S}an {F}rancisco, {CA}, 1998), vol.~529 of Mater. Res. Soc. Sympos. Proc., MRS, Warrendale, PA, 1998, pp.~39--46.

\bibitem{garcke2025structure}
{\sc H.~Garcke, W.~Jiang, C.~Su, and G.~Zhang}, {\em Structure-preserving parametric finite element method for surface diffusion based on {L}agrange multiplier approaches}, SIAM J. Sci. Comput., 47 (2025), pp.~A1983--A2011.

\bibitem{helfrich1973elastic}
{\sc W.~Helfrich}, {\em Elastic properties of lipid bilayers: Theory and possible experiments}, Zeitschrift für Naturforschung C, 28 (1973), pp.~693--703.

\bibitem{huang2020highly}
{\sc F.~Huang, J.~Shen, and Z.~Yang}, {\em A highly efficient and accurate new scalar auxiliary variable approach for gradient flows}, SIAM J. Sci. Comput., 42 (2020), pp.~A2514--A2536.

\bibitem{huang2026weighted}
{\sc Q.-A. Huang, W.~Jiang, J.~Z. Yang, and C.~Yuan}, {\em A weighted scalar auxiliary variable method for solving gradient flows: bridging the nonlinear energy-based and {L}agrange multiplier approaches}, J. Sci. Comput., 106 (2026), pp.~Paper No. 16, 29.

\bibitem{kemmochi2025structure}
{\sc T.~Kemmochi, Y.~Miyatake, and K.~Sakakibara}, {\em Structure-preserving numerical methods for constrained gradient flows of planar closed curves with explicit tangential velocities}, Jpn. J. Ind. Appl. Math., 42 (2025), pp.~575--603.

\bibitem{kemmochi2022scalar}
{\sc T.~Kemmochi and S.~Sato}, {\em Scalar auxiliary variable approach for conservative/dissipative partial differential equations with unbounded energy functionals}, BIT, 62 (2022), pp.~903--930.

\bibitem{kimura1997numerical}
{\sc M.~Kimura}, {\em Numerical analysis of moving boundary problems using the boundary tracking method}, Japan Journal of Industrial and Applied Mathematics, 14 (1997), pp.~373--398.

\bibitem{elliott2017approximations}
{\sc C.~M.~Elliott and H.~Fritz}, {\em On approximations of the curve shortening flow and of the mean curvature flow based on the deturck trick}, IMA Journal of Numerical Analysis, 37 (2017), pp.~543--603.

\bibitem{mikula2001evolution}
{\sc K.~Mikula and D.~{\v{S}}ev{\v{c}}ovi{\v{c}}}, {\em Evolution of plane curves driven by a nonlinear function of curvature and anisotropy}, SIAM Journal on Applied Mathematics, 61 (2001), pp.~1473--1501.

\bibitem{mikula2004direct}
\leavevmode\vrule height 2pt depth -1.6pt width 23pt, {\em A direct method for solving an anisotropic mean curvature flow of plane curves with an external force}, Mathematical Methods in the Applied Sciences, 27 (2004), pp.~1545--1565.

\bibitem{mullins1956two-dimensional}
{\sc W.~W. Mullins}, {\em Two-dimensional motion of idealized grain boundaries}, J. Appl. Phys., 27 (1956), pp.~900--904.

\bibitem{mullins1957theory}
\leavevmode\vrule height 2pt depth -1.6pt width 23pt, {\em Theory of thermal grooving}, Journal of Applied Physics, 28 (1957), pp.~333--339.

\bibitem{sevcovic2011evolution}
{\sc D.~{\v{S}}ev{\v{c}}ovi{\v{c}} and S.~Yazaki}, {\em Evolution of plane curves with a curvature adjusted tangential velocity}, Japan Journal of Industrial and Applied Mathematics, 28 (2011), pp.~413--442.

\bibitem{shen2018scalar}
{\sc J.~Shen, J.~Xu, and J.~Yang}, {\em The scalar auxiliary variable ({SAV}) approach for gradient flows}, J. Comput. Phys., 353 (2018), pp.~407--416.

\bibitem{willmore1993riemannian}
{\sc T.~J. Willmore}, {\em Riemannian geometry}, Oxford Science Publications, The Clarendon Press, Oxford University Press, New York, 1993.

\bibitem{yang2017numerical}
{\sc X.~Yang, J.~Zhao, and Q.~Wang}, {\em Numerical approximations for the molecular beam epitaxial growth model based on the invariant energy quadratization method}, J. Comput. Phys., 333 (2017), pp.~104--127.

\bibitem{zhang2024new}
{\sc J.~Zhang and X.~Wang}, {\em A new scalar auxiliary variable approach for gradient flows}, 2024.

\end{thebibliography}

\end{document}